\documentclass{article}
\usepackage{graphicx} 
\usepackage{amsmath}
\usepackage{amsthm}
\usepackage{amssymb}
\usepackage{hyperref} 
\usepackage[a4paper, total={6in, 8in}]{geometry}
\newtheorem{theorem}{Theorem}[section]
\newtheorem{corollary}[theorem]{Corollary}
\newtheorem{lemma}[theorem]{Lemma}
\newtheorem{proposition}[theorem]{Proposition}
\newtheorem{definition}[theorem]{Definition}
\newtheorem{example}[theorem]{Example}
\newtheorem{remark}[theorem]{Remark}
\title{The oriented swap process on half line}
\author{Yuan Tian}
\date{\today}

\begin{document}

\maketitle

\begin{abstract}
    In this paper we study the oriented swap process on the positive integers and its asymptotic properties. Our results extend a theorem by Angel, Holroyd, and Romik regarding the trajectories of particles in the finite oriented swap process. Furthermore, we study the evolution of the type of a particle at the leftmost position over time. Our approach relies on a relationship between multi-species particle systems and Hecke algebras, complemented by a detailed asymptotic analysis. 
\end{abstract}

\section{Introduction}
The oriented swap process, one of the two main models in the study of random sorting networks, was first explored by Angel, Holroyd, and Romik \cite{angel2009oriented}. Unlike the uniform random sorting networks, which were initially studied in \cite{angel2007random}, the oriented swap process can be naturally interpreted as a multi-colored interacting particle system with states in a permutation group. Specifically, a finite oriented swap process with $n$ particles is a continuous-time Markov process that takes values in the permutation group $\mathcal{S}_n$, denoted by $\pi_{t}^{n}$. For $i \in {1, 2, \dots n-1}$, let $\tau_{i} \in \mathcal{S}_n$ be the transposition of $i$ and $i + 1$. The transition rate from a configuration $\sigma$ to $\tau_{i}\sigma$ is $1$ if $\sigma(i) < \sigma(i + 1)$. \footnote{In this paper, $\tau \sigma(i) = \tau(\sigma(i))$. Under this convention, the left multiplication by a transposition swaps the positions of particles with label $1$ and $2$, while the right multiplication swaps the particles at positions $1$ and $2$.}

Angel, Holroyd, and Romik \cite{angel2009oriented} established several asymptotic properties of the finite oriented swap process with finite $n$, including the convergence of the trajectories, the fluctuations of the finishing time, and the convergence of the number of inversions in the permutations. Subsequent studies, such as \cite{bisi2022oriented}, \cite{bufetov2022absorbing}, \cite{zhang2023shift}, have further explored the fluctuations of the absorbing time utilizing various symmetries of the multi species-TASEP. 

However, these studies primarily focus on properties for a finite $n$.  We extend the analysis to the oriented swap process on all positive integers in this paper. We name it as the oriented swap process on the half line, whose configuration at time $t \in \mathbb{R}_{>0}$ is denoted as $\pi_t$. 

\begin{definition}
    The swap process on the half line $\pi_{t}$ is a stochastic process that takes values in a measurable space $(\Omega, \mathcal{F})$. $\Omega$ is the set of all permutations on $\mathbb{Z}^+$, i.e. for a fixed $t$ $\pi_{t}, \mathbb{Z}^+ \rightarrow \mathbb{Z}^+$ is a random bijection of $\mathbb{Z}^+$. The sigma algebra $\mathcal{F}$ is generated by the union of all the subsets of finite random permutations $\mathcal{S}_n$ for all $n$. Its value at time $t$ is defined as the union of finite swap process on non-intersect segments(see Harris construction in Section \ref{sec:Pre}). In our notation, $\pi_{t}(l)$ is the position of the particle $l$ at time $t$, and $\pi_{t}^{-1}(l)$ is the label of the particle at position $l$ at time $t$. Later in this paper, we sometimes abuse the notation $\pi_{t}$ as the configuration of the infinite swap process. 
\end{definition}

We also have the inverse of this permutation, denoted as $\pi_t^{-1}$. Specifically, $\pi_t(i) = j$ indicates that, at time $t$, particle $i$ is at position $j$, while $\pi_t^{-1}(i) = j$ indicates that, at time $t$, the particle at position $i$ is the one with the label $j$, see Figure \ref{fig:1} and Figure \ref{fig:2} as examples. 

\begin{example}
    \begin{figure}
    \centering
    \includegraphics[scale = 0.4]{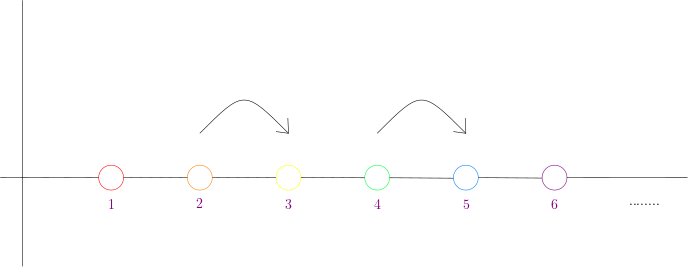}
    \caption{The oriented swap process at time $0$, $\pi_t(i) = i, \forall i$}
    \label{fig:1}
    \end{figure}

    \begin{figure}
    \centering
    \includegraphics[scale = 0.4]{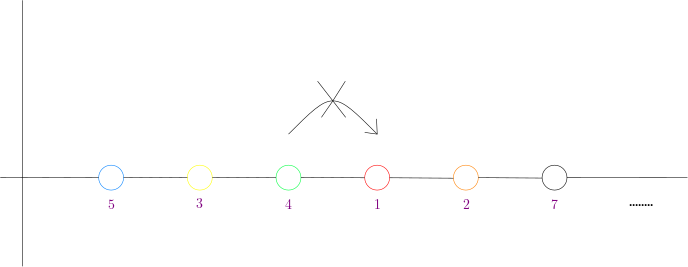}
    \caption{The oriented swap process after several jumps}
    \label{fig:2}
    \end{figure}
    In Figure \ref{fig:1} and Figure \ref{fig:2}, an arrow is a swap attempt, a cross on the arrow shows the swap is suppressed since the particle on the left is weaker. In Figure \ref{fig:2}, $(\pi_t(1), \pi_t(2), \pi_t(3), \pi_t(4), \pi_t(5)) = (4, 5, 2, 3, 1) $ and  $(\pi_t^{-1}(1), \pi_t^{-1}(2), \pi_t^{-1}(3), \pi_t^{-1}(4), \pi_t^{-1}(5), \pi_t^{-1}(6)) = (5,3,4,1,2,7) $. 
\end{example}

While some similar questions can be posed as in the finite case, such as the convergence of trajectories, other properties from the finite model no longer apply in the infinite setting. However, the half-line model introduces new and intriguing questions. We are particularly interested in the following one. Which particle occupies the position $1$ at time $t$? In the finite setting, the leftmost position will eventually be occupied by the weakest particle $n$. However, in the infinite system, there will always be weaker particles that get pushed back to this position. This naturally leads to the question of the asymptotic distribution of $\pi_{t}^{-1}(1)$ for large $t$. To the best of our knowledge, no purely probabilistic tools are available to describe which particle is at position 1 at time t. However, we can employ algebraic tools to transform the problem into a study of $\pi_{t}(1)$. After applying the symmetry property, it is somewhat unexpected to observe that $\pi_{t}^{-1}(1)$ follows a Poisson distribution. Moreover, we establish the law of large numbers and the Gaussian fluctuation of $\pi_{t}^{-1}(1)$. More generally, the first main result of this paper is as follows.

\begin{theorem}[Theorem \ref{thm3}, Corollary \ref{thm4}]\label{thmtyy}
    Let $\alpha ,s \in \mathbb{R}$, one has
    \begin{enumerate}
        \item $$\lim_{t \to \infty}\frac{\pi_{s t}^{-1}(\alpha s t)}{t} = (\alpha + U) \vee (\sqrt{\alpha}-1)^2 \times s,$$ in the sense of convergence in distribution with respect to the uniform topology on functions on $[0,+ \infty)$
        \item $$\lim_{t \to \infty}\left( \frac{\pi_t^{-1}(1) - t}{\sqrt{t}}, \frac{\pi_t^{-1}(2) - t}{\sqrt{t}}, \dots \frac{\pi_t^{-1}(l) - t}{\sqrt{t}} \right) \overset{(d)}{\rightarrow}  (\lambda_1^1, \lambda_2^2, \dots \lambda_l^l),$$
    \end{enumerate}
    where $U$ is a uniform random variable on $[-1,1]$, and $(\lambda_1^1, \lambda_2^2, \dots \lambda_l^l)$ is the GUE corners process (see Section \ref{sec:Pre}).
\end{theorem}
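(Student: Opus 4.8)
The plan is to translate the statement about the inverse permutation $\pi_t^{-1}$ into a statement about the permutation $\pi_t$ itself using the symmetry relating multi-species TASEP to its dual, and then to feed the resulting question into the known limit theorems for TASEP-type particle systems (hydrodynamic limit with $\vee (\sqrt{\alpha}-1)^2$ being the second-class/tagged-particle law of large numbers profile, and GUE corners for the fluctuations at the edge).

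\textbf{Step 1: Algebraic reduction via the Hecke algebra symmetry.} First I would invoke the relationship between the oriented swap process and the Hecke algebra of type $A$ (as advertised in the abstract and presumably developed in Section~\ref{sec:Pre}) to establish an identity in distribution of the form $\pi_t^{-1} \overset{(d)}{=} w_0 \, \pi_t \, w_0$ or a similar conjugation/reflection symmetry, valid after an appropriate space reversal. Concretely, the goal is to show that the event $\{\pi_t^{-1}(1) = m\}$ — the particle at position $1$ has label $m$ — has the same probability as a corresponding event about $\pi_t(1)$ — the particle labeled $1$ sits at position $m$ — possibly after reflecting the half-line. This is the step where the algebra does the real work: on the half line there is no $w_0$, so the symmetry must be set up as a limit of finite-$n$ symmetries, and one must check the finite-$n$ identities converge (using the Harris coupling and the fact that, for fixed $t$ and fixed observables, only finitely many coordinates are relevant with high probability — a finite speed of propagation / locality estimate).

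\textbf{Step 2: Identify the transformed quantity with a known TASEP observable.} Once the problem is about $\pi_{st}(\alpha s t)$ — the position at time $st$ of the particle initially at site $\lfloor \alpha s t\rfloor$ — I would recognize this as (the trajectory of) a tagged particle in a multi-species TASEP started from the step-type initial condition on $\mathbb{Z}^+$. For such a tagged/second-class particle the law of large numbers gives the deterministic profile $(\alpha + U)\vee(\sqrt\alpha - 1)^2$: the $\alpha + U$ branch with $U$ uniform on $[-1,1]$ is the classical result that a second-class particle (here the ``randomness'' $U$ encodes which ray it chooses in the rarefaction fan) travels ballistically with a uniformly random speed, while the $(\sqrt\alpha-1)^2$ branch is the hydrodynamic characteristic coming from the Burgers profile for the step initial data — the particle cannot move faster than the bulk density allows. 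The process-level (functional, uniform-topology) convergence follows by combining the one-time-marginal convergence with monotonicity of $t\mapsto \pi_{st}^{-1}(\alpha st)$ and tightness, exactly as in Angel--Holroyd--Romik's treatment of trajectories in the finite model, which this theorem is extending.

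\textbf{Step 3: Edge fluctuations and the GUE corners process.} For part (2), after the Step~1 reduction the vector $(\pi_t^{-1}(1),\dots,\pi_t^{-1}(l))$ becomes (up to the reflection) a vector of the leftmost particle positions / first few coordinates of the associated TASEP height function near the edge of the rarefaction fan, recentered by $t$ and scaled by $\sqrt t$. I would then cite the known result that the positions of the first $l$ particles in TASEP with step initial condition, rescaled this way, converge jointly to the largest eigenvalues $(\lambda_1^1,\lambda_2^2,\dots,\lambda_l^l)$ on the successive diagonals of the GUE corners process — this is the Baik--Ben~Arous--Péché / Johansson-type edge statement together with its multilevel (corners) refinement. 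The main obstacle I anticipate is \emph{Step 1}: making the finite-$n$ Hecke-algebra symmetry pass rigorously to the half-line limit, since one must show that the symmetry is compatible with the Harris construction on $\mathbb{Z}^+$ and that the relevant marginals stabilize — everything downstream is then an application of established TASEP asymptotics, modulo bookkeeping to match the rarefaction-fan geometry (the max with $(\sqrt\alpha - 1)^2$) with the correct characteristic of the Burgers equation.
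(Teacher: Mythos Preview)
Your outline matches the paper's strategy: reduce $\pi_t^{-1}$ to $\pi_t$ via the Hecke-algebra symmetry, then feed the result into known TASEP asymptotics (second-class-particle law of large numbers for part~(1), Baryshnikov's GUE corners for part~(2)). Two points where your details diverge from what is actually needed.

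First, the symmetry is simpler than you propose: there is no $w_0$ conjugation or space reflection. The anti-involution $T_\omega \mapsto T_{\omega^{-1}}$ on the Hecke algebra gives $\pi_t^n \overset{(d)}{=} (\pi_t^n)^{-1}$ directly for each finite $n$, and this passes to the half line via Harris exactly as you anticipate. So your ``main obstacle'' in Step~1 is real but less severe than you suggest.

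Second, for part~(2), after the symmetry you still do not literally have step-TASEP particle positions: $(\pi_t(1),\dots,\pi_t(l))$ are the $l$ strongest particles of the half-line swap process, starting at $1,\dots,l$, whereas the top $l$ particles of $\sigma_t^0$ start at $0,-1,\dots,1-l$. The paper closes this gap not by citation but by a concrete coupling: drive both systems with the same Poisson clocks and show $\pi_t(\ell)-\sigma_t^0(\ell)\to 0$ almost surely, using that the difference is (dominated by) a continuous-time simple random walk, hence recurrent; Slutsky then transfers the GUE corners limit. This coupling step is the one substantive ingredient missing from your Step~3.

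For part~(1), the paper makes your ``recognize this as a tagged particle'' step precise via the cut-off operator lemma of Angel--Holroyd--Romik, which expresses $\pi_{st}(\alpha st)$ exactly as the maximum of a (shifted) second-class-particle position and the $(\alpha st)$-th rightmost particle position in step TASEP; the two branches of the $\vee$ then come from the Mountford--Guiol uniform law and the Rost hydrodynamic limit respectively.
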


Next, our objective is to investigate the properties of $\pi_{t}^{-1}(1)$ as a stochastic process. In this direction, our first results describe the number of successful jumps at position $1$ within a given time and the approximate height of each jump when the time is large.

\begin{theorem}[Theorem \ref{thm5.19}]\label{thmt}
     Let $J_t$ be the number of successful jumps occurring at position 1 during time $[0, t]$. Then, we have
     $$\lim_{t \to \infty}\mathbb{E}\left[\frac{J_t}{\sqrt{t}}\right] = \frac{2}{\sqrt{\pi}}. $$
     Let $A_t$ be the event that there is a swap at position $1$ at time $t$ then conditioning on $A_t$(formally, this event has probability 0, we provide details in Section \ref{sec:Main}), the height of that swap $H_t$ is of order $\sqrt{t}$. Moreover, we have:
     $$\lim_{t \to \infty}\mathbb{E}\left[\frac{H_t}{\sqrt{t}} | A_t\right] = \sqrt{\pi}.$$
\end{theorem}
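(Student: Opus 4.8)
The plan is to express both quantities through two scalar functions of time, namely
\[
p(t):=\mathbb{P}\big(\pi_t^{-1}(1)<\pi_t^{-1}(2)\big),\qquad g(t):=\mathbb{E}\big[\big(\pi_t^{-1}(2)-\pi_t^{-1}(1)\big)_{+}\big],
\]
the probability that a swap at position $1$ is available at time $t$, and the mean available overshoot. In the Harris construction of Section~\ref{sec:Pre}, position $1$ interacts only with position $2$; a ring of the rate-$1$ Poisson clock $N^{(1,2)}$ on the edge $(1,2)$ at time $s$ produces a successful swap precisely when $\pi_{s^-}^{-1}(1)<\pi_{s^-}^{-1}(2)$, and in that case $\pi_s^{-1}(1)$ jumps upward by exactly $H_s=\pi_{s^-}^{-1}(2)-\pi_{s^-}^{-1}(1)\ge 1$ (so $s\mapsto\pi_s^{-1}(1)$ is non-decreasing). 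Thus $J_t$ and $\pi_t^{-1}(1)$ are integrals against $N^{(1,2)}$ with bounded predictable integrands, and since the compensator of $N^{(1,2)}$ is deterministic, the Dynkin/predictable-projection formula gives
\[
\mathbb{E}[J_t]=\int_0^t p(s)\,ds,\qquad \mathbb{E}[\pi_t^{-1}(1)]-1=\int_0^t g(s)\,ds .
\]
Finally, the conditioning on the (formally null) event $A_t$ is a Palm conditioning: conditioning on a ring of the edge-$(1,2)$ clock in $[t,t+\delta]$, the conditional probability that it is successful tends to $p(t)$ as $\delta\downarrow 0$, and the resulting height tends to $\big(\pi_t^{-1}(2)-\pi_t^{-1}(1)\big)_+$, so $\mathbb{E}[H_t\mid A_t]=g(t)/p(t)$.

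Next I would feed in Theorem~\ref{thmtyy}. Its second part with $l=1$ gives $(\pi_t^{-1}(1)-t)/\sqrt t\Rightarrow\lambda_1^1$, and combined with a uniform-integrability estimate — or, more directly, with the explicit Poissonian law of $\pi_t^{-1}(1)$ established in Section~\ref{sec:Main}, which forces $\mathbb{E}[\pi_t^{-1}(1)]=t+O(1)$ — one obtains $\mathbb{E}[\pi_t^{-1}(1)]=t+o(\sqrt t)$; using the second identity above (together with monotonicity/continuity of $g$, resp. the exact mean) this yields $g(t)\to 1$. With this in hand everything reduces to the single estimate
\[
p(t)=\mathbb{P}\big(\pi_t^{-1}(1)<\pi_t^{-1}(2)\big)\sim\frac{1}{\sqrt{\pi t}}\qquad(t\to\infty).
\]
Indeed, granting it, $\mathbb{E}[J_t]/\sqrt t=\tfrac1{\sqrt t}\int_0^t p(s)\,ds\to 2/\sqrt\pi$ (the contribution of $s$ in a bounded set is $O(1/\sqrt t)$ and $\int_0^t(\pi s)^{-1/2}\,ds=2\sqrt{t/\pi}$), while $\mathbb{E}[H_t\mid A_t]/\sqrt t=g(t)/(\sqrt t\,p(t))\to\sqrt\pi$, which are the two assertions of the theorem.

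The heart of the matter, and the step I expect to be the main obstacle, is the sharp asymptotic $p(t)\sim(\pi t)^{-1/2}$ with the correct constant. The weak limit of Theorem~\ref{thmtyy} alone is useless here, since the event $\{\pi_t^{-1}(1)<\pi_t^{-1}(2)\}$ is asymptotically negligible: the limiting pair interlaces, $\lambda_2^2\le\lambda_1^1$ almost surely, so one is forced to resolve a window of width $O(1/\sqrt t)$ at the ``edge'' $\pi_t^{-1}(1)=\pi_t^{-1}(2)$. One must therefore either upgrade the convergence to a \emph{local} limit theorem for the joint law of $(\pi_t^{-1}(1),\pi_t^{-1}(2))$, or — the route suggested by the rest of the paper — extract from the Hecke-algebra/multi-species TASEP correspondence an exact formula for $p(t)$ (plausibly a combination of modified Bessel functions, whose large-$t$ asymptotics produce exactly $(\pi t)^{-1/2}$) and analyze it by steepest descent. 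In either approach the constant should be identified as the value at $0^{+}$ of the density of $\lambda_1^1-\lambda_2^2$, and this can be computed by hand: realizing the $2\times2$ GUE corner with eigenvalues $\tfrac{a+c}{2}\pm\tfrac12\sqrt{(a-c)^2+4|b|^2}$, $a,c\sim N(0,1)$ independent and $b$ complex Gaussian with $\mathbb{E}|b|^2=1$, one has $\lambda_1^1-\lambda_2^2=\tfrac{a-c}{2}+\tfrac12\sqrt{(a-c)^2+4|b|^2}$, which for $a<c$ behaves like $|b|^2/|a-c|$ near $0$; since $|b|^2\sim\mathrm{Exp}(1)$ is independent of $a-c\sim N(0,2)$, the density at $0^{+}$ equals $\tfrac12\,\mathbb{E}|a-c|=\tfrac12\cdot\tfrac{2}{\sqrt\pi}=\tfrac1{\sqrt\pi}$.

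As a final consistency check I would note that $\pi_t^{-1}(1)-1=\sum_{k\le J_t}H_{t_k}$ has mean $\sim t$, whereas $\mathbb{E}[J_t]\cdot\mathbb{E}[H_t\mid A_t]\sim(2/\sqrt\pi)\sqrt t\cdot\sqrt{\pi t}=2t$; the factor $2$ is the expected inspection-paradox/size-bias discrepancy, arising because the typical jump height $g(s)/p(s)\sim\sqrt{\pi s}$ grows with $s$, so its rate-weighted average over $[0,t]$ is half its value at the terminal time.
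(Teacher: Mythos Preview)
Your reduction is sound and in fact identical to the paper's: write $\mathbb{E}[J_t]=\int_0^t p(s)\,ds$ and $\mathbb{E}[H_t\mid A_t]=g(t)/p(t)$, then show $g(t)\to 1$ and $p(t)\sim(\pi t)^{-1/2}$. Your argument that $g(t)\equiv 1$ via the Poisson law of $\pi_t^{-1}(1)$ is correct (and slightly slicker than the paper, which computes the same numerator by a reflection-principle telescoping and gets~$1$ up to a harmless extra term).

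Where you diverge from the paper is precisely the part you flag as the ``main obstacle'': the sharp asymptotic $p(t)\sim(\pi t)^{-1/2}$. You propose either a local limit theorem for the joint law of $(\pi_t^{-1}(1),\pi_t^{-1}(2))$ or Bessel-function asymptotics extracted from the Hecke-algebra machinery, and you verify the constant by a GUE-corner density computation. All of this is unnecessary. The paper's key observation is that the \emph{same} inversion symmetry you used to get the Poisson law of $\pi_t^{-1}(1)$ (Corollary~\ref{thm2.0}) applies to the pair:
\[
p(t)=\mathbb{P}\big(\pi_t^{-1}(1)<\pi_t^{-1}(2)\big)=\mathbb{P}\big(\pi_t(1)<\pi_t(2)\big).
\]
But particles $1$ and $2$ are the two strongest in the system: until they meet, each jumps right at rate~$1$ unimpeded, so the gap $\pi_t(2)-\pi_t(1)$ is exactly a continuous-time simple random walk started at~$1$, and $p(t)$ is the probability this walk has not hit~$0$ by time~$t$. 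Lemma~\ref{lem5.3} then gives $p(t)\sim 1/\sqrt{\pi t}$ immediately. No local limit theorem, no steepest descent, no GUE density.

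So your proposal is not wrong, but it treats as hard the one step that the paper dispatches in a line. The missing idea is simply to apply the inversion symmetry to the \emph{joint} law of the first two coordinates, not just to $\pi_t^{-1}(1)$ alone.
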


Theorem \ref{thmt} deals only with a single time $t$. Our next goal is to quantify the joint distribution at two different times $t_1$ and $t_2$. Although at a fixed time $t$, $\pi_{t}^{-1}(1)$ can be precisely characterized through the inversion symmetry, which will be developed in Section \ref{sec:Tra} and Section \ref{sec:Flu}, its joint distribution at two different times is significantly more complicated. Although the marginal distribution at any given time $t$ is Poisson, the process is very far from being a Poisson process. 

The analysis of $\pi_{t}^{-1}$ relies heavily on the symmetry properties of the swap process. These properties have been established in various studies, such as \cite{angel2007random}, \cite{angel2009oriented}, \cite{borodin2021color}, but all of them only hold for single time $t$. From the perspective of Hecke algebra \cite{bufetov2020interacting}, the joint distribution loses a concise form when multiple times are considered, making it challenging to handle technically. 

In this paper, we only present a weaker statement than the full description of two-point correlation function of $\pi_{t}^{-1}$. Our main result shows that the next jump on the scale $\sqrt{t}$, given a large time $t$, is asymptotically a folded normal distribution. 

\begin{theorem}[Theorem \ref{thm5.2}]\label{thmty}
    Define $\tau(t) = \inf \{s: \pi^{-1}_t(1) < \pi^{-1}_{t+s}(1)\}$, let $\mathcal{G}$ be the absolute value of a Gaussian random variable with mean $0$ and variance $2$. Then, we have:
    $$
    \lim_{t \to \infty}\frac{\tau(t)}{\sqrt{t}} = \mathcal{G}, 
    $$
    in distribution. 
\end{theorem}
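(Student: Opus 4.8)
The plan is to turn $\tau(t)$ into a residual-life question for the increasing process $r\mapsto\pi_r^{-1}(1)$ and then read off the limiting law from single-time inputs. First I would note that $r\mapsto\pi_r^{-1}(1)$ is nondecreasing: in the two-colouring of the swap process in which colours $\le j$ are ``particles'' and colours $>j$ are ``holes'', position $1$ of the half line, once occupied by a hole, is never revisited by a particle (particles only move right and there is no site $0$), so $\{\pi_r^{-1}(1)>j\}$ is increasing in $r$ for every $j$. Setting $T_j:=\inf\{r\ge 0:\pi_r^{-1}(1)>j\}$ we then get $\pi_t^{-1}(1)=1+\max\{j:T_j\le t\}$ and
\[
\tau(t)=T_{\pi_t^{-1}(1)}-t=\min\{T_j:T_j>t\}-t,
\]
so $\tau(t)$ is the forward distance from $t$ to the next jump of $\pi_\cdot^{-1}(1)$. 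Equivalently, $\tau(t)$ is, up to an independent $\mathrm{Exp}(1)$ that is negligible on scale $\sqrt t$, the first time after $t$ at which the colour currently at position $2$ overtakes the colour frozen at position $1$; this reformulation is what enables a restart argument, since positions $\ge 2$ evolve autonomously of position $1$ until that overtaking moment.

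Next I would feed in the known single-time results. The monotonicity above together with the Poisson law of $\pi_t^{-1}(1)$ (Theorem~\ref{thm3}) gives $\mathbb P(T_j\le t)=\mathbb P(\pi_t^{-1}(1)>j)$, i.e.\ $T_j$ is distributed as a sum of $j$ i.i.d.\ $\mathrm{Exp}(1)$ variables for each fixed $j$; and Theorem~\ref{thmtyy}(2) gives that $(\pi_t^{-1}(1)-t,\pi_t^{-1}(2)-t)/\sqrt t$ converges to $(\lambda_1^1,\lambda_2^2)$. By Theorem~\ref{thmt} the jumps of $\pi_\cdot^{-1}(1)$ have heights of order $\sqrt t$ and occur at times of order $\sqrt t$ apart near time $t$, so $\tau(t)$ is the forward gap of a locally stationary renewal-type point process with gaps of order $\sqrt t$. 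The computation then reduces to identifying the limiting rescaled gap as a Rayleigh$(\sqrt2)$ variable, for which the stationary forward recurrence time has density $\bar G(x)/\mu=\tfrac1{\sqrt\pi}e^{-x^2/4}$ — exactly the density of $|\mathcal N(0,2)|=\mathcal G$. An equivalent route is to show directly that $\tau(t)/\sqrt t$ converges to $\lambda_1^1-\lambda_2^2$ from the GUE corners process and to verify by a $2\times2$ GUE calculation that $\lambda_1^1-\lambda_2^2=u+\sqrt{u^2+V}$ with $u\sim\mathcal N(0,\tfrac12)$ independent of $V\sim\mathrm{Exp}(1)$, whose density integrates to $\tfrac1{\sqrt\pi}e^{-w^2/4}$; again the folded normal.

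The main obstacle is the two-time estimate implicit in both routes — precisely the place where, as remarked in the introduction, the Hecke-algebra picture loses a closed form. My strategy would be: condition on $\mathcal F_t$; couple the evolution on positions $\ge 2$ after time $t$ with a fresh half-line swap process started from the time-$t$ configuration; and exploit that the relevant window $[t,t+\tau(t)]$ has length $O(\sqrt t)=o(t)$, so the overtaking time depends on the time-$t$ data only through a mesoscopic neighbourhood of position $2$, whose fluctuations decouple from the macroscopic fluctuation of the colour at position $1$. Making this quantitative requires a second-moment bound on the displacement of the colour at position $2$ over windows of length $O(\sqrt t)$ — equivalently, control of the local cluster structure of the jumps of $\pi_\cdot^{-1}(1)$ and the asymptotic independence of successive rescaled gaps — together with the tightness needed to upgrade the convergence to the stated mode. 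I expect this estimate, rather than the algebraic reductions or the Gaussian integral, to be the technical heart of the proof.
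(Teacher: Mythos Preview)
Your heuristics are sound and the $2\times 2$ GUE computation showing $\lambda_1^1-\lambda_2^2\stackrel{d}{=}|\mathcal N(0,2)|$ is correct and elegant, but the proposal has a genuine gap precisely where you flag it: the two-time estimate. Neither route you outline actually closes it. The renewal-type argument presupposes that the rescaled inter-jump gaps are asymptotically i.i.d.\ Rayleigh, which is not established (and is not a consequence of Theorem~\ref{thmt}, which gives only expectations). The GUE-corners route asserts $\tau(t)/\sqrt t\to\lambda_1^1-\lambda_2^2$, but there is no mechanism tying $\tau(t)$ to $\pi_t^{-1}(2)$: the label at position~$2$ is generically \emph{smaller} than $\pi_t^{-1}(1)$ and plays no direct role in when position~$1$ next updates. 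The coupling/mesoscopic-decoupling strategy you sketch is not precise enough to supply the needed input.

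The paper avoids the two-time obstacle altogether by a device you are missing. Let $R_t$ be the position of the nearest site to the right of~$1$ whose current label exceeds $\pi_t^{-1}(1)$. Two observations make the problem one-dimensional: (i) \emph{after} time $t$, as long as no swap has occurred at position~$1$, the process $s\mapsto R_{t+s}$ decreases by exactly one at each ring of the clock on edge $(R_{t+s}-1,R_{t+s})$ and is unaffected by every other clock, so $R_t-R_{t+s}$ is a rate-$1$ Poisson process and hence $\mathbb P(\tau(t)\le\tau\mid R_t=k)=\mathbb P(\mathrm{Poi}(\tau)\ge k-1)$; (ii) the event $\{R_t\ge k\}=\{\pi_t^{-1}(1)>\pi_t^{-1}(i)\ \forall\,2\le i<k\}$ is a \emph{single-time} event, so the Hecke symmetry applies and turns it into $\{\pi_t(1)>\max_{2\le i<k}\pi_t(i)\}$, which is the probability that a continuous-time simple random walk started at $k-1$ has not hit~$0$ by time~$t$. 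Combining (i) and (ii) gives an explicit double sum for $\mathbb P(\pi_t^{-1}(1)<\pi_{t+\tau}^{-1}(1))$ whose $\tau=c\sqrt t$ asymptotics is then a Stirling/CLT computation yielding the folded normal. The point is that no genuine two-time control is needed: conditioning on $R_t$ makes the post-$t$ evolution deterministic-in-law, and $R_t$ itself is handled by the one-time symmetry.
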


We can phrase the previous theorem in a more explicit way: Consider the probability before the limit,  $\mathbb{P}(\frac{\tau(t)}{\sqrt{t}} < c) = \mathbb{P}(\tau(t) < c \sqrt{t}) = \mathbb{P}(\pi^{-1}_t(1) < \pi^{-1}_{t+c \sqrt{t}}(1))$. Therefore, it is equivalent to the calculation of the limit of $\mathbb{P}(\pi^{-1}_t(1) < \pi^{-1}_{t+c \sqrt{t}}(1))$, as $t \to \infty$, which is the quantity we consider later in Section \ref{sec:Main}. 

The rest of the paper is organized as follows. In Section \ref{sec:Pre} we establish the setup of our model and review relevant background material. In Section \ref{sec:Tra} we prove the first statement of Theorem \ref{thmtyy}, which encodes the law of large number of the position of a certain particle. We also provide a description of the fluctuation of $\pi_{t}^{-1}$ and $\pi_{t}^{1}$, as we stated in the second part of \ref{thmtyy}, via a coupling method in Section \ref{sec:Flu}. Finally, Section \ref{sec:Main} is devoted to exploring several properties of $\pi_{t}^{-1}$ as a stochastic process and to the proof of Theorem \ref{thmt} and Theorem \ref{thmty}. 

\textbf{Acknowledgments.} The author would like to express his gratitude to his advisor, Alexey Bufetov, for suggesting this topic and for numerous insightful discussions. 

\section{Preliminaries}
\label{sec:Pre}
\subsection{Construction of the process}
In addition to the definition of the finite oriented swap process provided in the introduction, we offer an equivalent formulation. Consider $n$ particles at positions $1 \dots n$ with initial condition $\pi_{0}^{n}(i) = i$ and there are $n-1$ independent Poisson processes on $\mathbb{R}^{+}$ with rate $1$ consecutive integers, when the Poisson process between $i$ and $i+1$ jumps, there is an attempt of swap, when $\pi(i) < \pi(i+1)$, the swap occurs; otherwise it is suppressed. This description is known as the graphical representation.

The graphical representation enables us to rigorously define the oriented swap process on the half line. The existence of this infinite oriented swap process can be established by using Harris construction \cite{harris1972nearest}, \cite{harris1978additive}. Specifically, for any fixed time $t$, the probability that no swap occurs between $k$ and $k+1$ is $\exp(-t) > 0$, thus the sum of the events of no swaps between the intervals is $\infty$. By the Borel-Cantelli lemma, with probability $1$ we can divide the positive half line into finite segments such that no swaps have occurred between them before time $t$ and construct the infinite process as the union of finite swap processes. 

The totally asymmetric simple exclusion process (TASEP) was introduced to mathematics by Spitzer in \cite{spitzer1970interaction} and has been widely studied over the past decades. It is a dynamical configuration on an interval with particles and holes, with a rate of $1$ the particles jump to its right if there is a hole. An initial condition of TASEP is defined by a function $f$ on each site of the interval,  $f(x) = 0$ indicates a hole at time $0$, $f(x) = 1$ indicates a particle at time $0$. Using the graph construction, TASEP can be defined on both finite and infinite intervals. We denote a TASEP on interval $I$ with initial condition $f$ as $\sigma^{f, I}_{t}$.  I $I = \mathbb{Z}$ we omit the $I$ in the notation, if in addition $f= \mathbf{1}_{x \leq k}$, we denote it as $\sigma^{k}_{t}$. Similarly, $\sigma^{f, I}_{t}(l)$ represents the position of particle $l$ at time $t$ in that TASEP. In comparison with the oriented swap process, TASEP has only particles and holes, rather than many different types of particles.  

\subsection{Hecke algebra and Symmetry}
We briefly outline the concepts of Iwahori–Hecke algebra(referred to as the Hecke algebra in the remainder of this paper)
that are relevant to our work. The content of this section is borrowed from \cite[Ch.~7]{humphreys1992reflection}, for detailed definitions and proofs we refer readers to the same source.  

\begin{definition}[Hecke algebra]
    Let $W$ be a Coxeter group with generators $S$ and length function $l$, the Hecke algebra $\mathcal{H}(W)$ is a linear space with basis $T_w$ and the following rule of multiplication, for $s \in S$ and $w \in W$:
    $$
    T_sT_w = \left \{ \begin{array}{cc}
        T_{sw}, & \text{if \quad}  l(sw) = l(w) + 1,  \\
        a_sT_w + b_s T_{sw}, & \text{if \quad} l(sw) = l(w) - 1.
    \end{array}
    \right .
    $$
\end{definition}

\begin{proposition}
    There exists an involution $i: \mathcal{H}(W) \rightarrow \mathcal{H}(W)$, defined as $i(T_{\omega}) = T_{\omega^{-1}}$, which is an anti-homomorphism, i.e.
    $$
    i(T_r T_{r-1} \dots T_2 T_1) = i(T_1) i(T_2) \dots i(T_{r-1}) i(T_r).
    $$
    In particular, if all the $T_i$ are indexed by the generators of the Hecke algebra, we have 
    $$
    i(T_{s_r} \dots T_{s_1}) = T_{s_1}  \dots T_{s_r}.
    $$
\end{proposition}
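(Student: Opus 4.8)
The plan is to treat the two clauses uniformly: the final displayed identity for products of generators is the heart of the matter, and the general formula for arbitrary elements $T_1,\dots,T_r$ follows from it by bilinearity. First I would note that $i$ is well defined and is a linear involution for free: since $\{T_w : w\in W\}$ is a basis, $T_w\mapsto T_{w^{-1}}$ extends uniquely to a linear map, and $i^2(T_w)=T_{(w^{-1})^{-1}}=T_w$. So the only content is the anti-homomorphism property $i(xy)=i(y)i(x)$ for $x,y\in\mathcal H(W)$. By bilinearity it suffices to check this for basis elements $x=T_v$, $y=T_w$; and writing $v,w$ as reduced words and using that $T_v=T_{s_{i_1}}\cdots T_{s_{i_p}}$ for any reduced expression $v=s_{i_1}\cdots s_{i_p}$ (the braid-relation consistency, which I take from \cite[Ch.~7]{humphreys1992reflection}), every such $x$, $y$, and product $xy$ is itself a product of generators $T_s$. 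Thus the whole statement reduces to the displayed special case
\[
i(T_{s_r}\cdots T_{s_1}) = T_{s_1}\cdots T_{s_r}, \qquad s_1,\dots,s_r\in S.
\]

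I would prove this by induction on $r$. The base case $r=1$ is $i(T_{s_1})=T_{s_1^{-1}}=T_{s_1}$ since $s_1^2=e$. For the step, set $y=T_{s_{r-1}}\cdots T_{s_1}$, so $i(y)=T_{s_1}\cdots T_{s_{r-1}}$ by the inductive hypothesis, and it is enough to prove
\[
i(T_s y)=i(y)\,T_s \qquad \text{for all } s\in S \text{ and all } y\in\mathcal H(W),
\]
which by linearity reduces to the key identity $i(T_s T_w)=T_{w^{-1}}T_s$ for $s\in S$, $w\in W$. To check the key identity I split according to the multiplication rule. If $l(sw)=l(w)+1$, then $T_sT_w=T_{sw}$, so $i(T_sT_w)=T_{(sw)^{-1}}=T_{w^{-1}s}$ (using $s=s^{-1}$); and since $l(w^{-1}s)=l(sw)=l(w)+1=l(w^{-1})+1$, the right-multiplication rule gives $T_{w^{-1}}T_s=T_{w^{-1}s}$, so the two sides agree. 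If $l(sw)=l(w)-1$, then $T_sT_w=a_sT_w+b_sT_{sw}$, so $i(T_sT_w)=a_sT_{w^{-1}}+b_sT_{w^{-1}s}$; and since now $l(w^{-1}s)=l(w^{-1})-1$, the right-multiplication rule gives $T_{w^{-1}}T_s=a_sT_{w^{-1}}+b_sT_{w^{-1}s}$, which matches. Throughout I use the standard Coxeter-group fact $l(g^{-1})=l(g)$, and that reversing a reduced word for $w$ yields a reduced word for $w^{-1}$.

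The one ingredient not literally contained in the definition is the right-handed multiplication rule, $T_wT_s=T_{ws}$ when $l(ws)=l(w)+1$ and $T_wT_s=a_sT_w+b_sT_{ws}$ when $l(ws)=l(w)-1$; this, together with well-definedness of $T_w$ from a reduced expression, is the main technical point and the part I would cite from \cite[Ch.~7]{humphreys1992reflection} rather than reprove. For a self-contained argument, the right rule can be obtained by the same induction on $l(w)$ that establishes associativity of the Hecke multiplication: write $w=w's'$ with $l(w')=l(w)-1$, use $T_w=T_{w'}T_{s'}$, and combine associativity with the left rule and the quadratic relation $T_{s'}^2=a_{s'}T_{s'}+b_{s'}T_e$. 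I expect this to be the only delicate point; everything else is bookkeeping with lengths and inverses. Finally, having proved $i(xy)=i(y)i(x)$, the general formula $i(T_r\cdots T_1)=i(T_1)\cdots i(T_r)$ for arbitrary $T_1,\dots,T_r\in\mathcal H(W)$ follows by an immediate induction on $r$.
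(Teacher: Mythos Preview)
The paper does not actually supply a proof of this proposition: it is stated as background and the reader is referred to \cite[Ch.~7]{humphreys1992reflection} for details. Your proposal therefore goes beyond what the paper does, and it is correct. Your reduction to the key identity $i(T_sT_w)=T_{w^{-1}}T_s$ via linearity, followed by the two-case check using the left- and right-multiplication rules together with $l(g^{-1})=l(g)$, is exactly the standard argument; the only external input you need, as you correctly flag, is the right-handed rule $T_wT_s=T_{ws}$ or $a_sT_w+b_sT_{ws}$, which is part of the basic structure theory in Humphreys. One small presentational comment: your initial paragraph reducing everything to products of generators via reduced expressions is not really needed once you have proved $i(T_sT_w)=i(T_w)i(T_s)$ directly; from that identity a clean induction on $l(v)$ gives $i(T_vT_w)=i(T_w)i(T_v)$ for all $v,w\in W$, and bilinearity finishes. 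This avoids the appearance of circularity in first invoking $T_v=T_{s_{i_1}}\cdots T_{s_{i_p}}$ and then later citing that same fact as something to be justified.
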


Many particle systems can be characterized using the Hecke Algebra or random walk on the Hecke Algebra, see \cite{bufetov2020interacting} for more examples. In this paper, we focus on the oriented swap process. Let $W = S_n$ be the permutation group, $S$ the set of $n-1$ nearest neighbor transpositions as the generators, and $l$ the length function, defined as the inversion number of a permutation. We set $a_s = 1$ and $b_s = 0$ and define a map $\mathcal{P}: S_n \rightarrow \mathcal{H}(S_n)$ such that a permutation $\omega$ is mapped to an element $\mathcal{P}(\omega) = T_{\omega}$ in the basis of the Hecke algebra. 

When a swap attempt occurs at position $i$, $\omega$ transitions to $\tau_i \omega$ if and only if $\omega(i) < \omega(i+1)$. On the other hand, $T_{\tau_i} T_\omega = T_{\tau_i \omega}$ if and only if $\omega(i) < \omega(i+1)$. Now, consider a continuous-time random walk on Hecke algebra, i.e. $W_t = \prod_{i=1}^{r} T_i$, where each $T_i$ is chosen uniformly from all the nearest neighbor transposition, and $r = Poi(t)$. Writing this random walk as a linear combination of the Hecke algebra basis defines a random measure on the basis. The probability that a finite swap process is in configuration $\omega$ at time $t$ corresponds to the expectation of coefficient of $T_{\omega}$ in the decomposition of $W_t$. 

\begin{corollary}\cite[Cor.~2.2]{bufetov2020interacting}\label{thm2.0}
    \begin{enumerate}
        \item For a finite oriented swap process at a fixed time $t$, $\pi_{t}^{n}$ and $(\pi_{t}^{n})^{-1}$ have the same law, the inverse is in the sense of the inversion of a permutation.
        \item For a infinite swap process at a fixed time $t$, and its first $k$ particles, 
        $$
        (\pi_{t}(1), \pi_{t}(2), \dots \pi_{t}(k)) \overset{(d)}{=} (\pi_{t}^{-1}(1), \pi_{t}^{-1}(2), \dots \pi_{t}^{-1}(k)).
        $$
    \end{enumerate}
\end{corollary}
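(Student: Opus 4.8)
The plan is to obtain part (1) from the anti-involution $i$ on the Hecke algebra and then bootstrap to part (2) by comparing the infinite process with finite ones on growing windows.

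\emph{Part (1).} I realize $\pi_t^n$ through the continuous-time random walk $W_t$ on $\mathcal{H}(S_n)$ described above: expanding $W_t$ in the basis $\{T_\omega\}$, the probability $\mathbb{P}(\pi_t^n=\omega)$ is the expectation of the coefficient of $T_\omega$, and since $a_s=1$, $b_s=0$ a product of generators applied to a basis element is again a single basis element, so in fact $W_t=T_{\Pi_t}$ for a random permutation $\Pi_t$ whose law is that of $\pi_t^n$ (or of its inverse, depending on the left/right convention in the footnote — which one it is does not matter below). Order the Poisson clicks in $[0,t]$ as $0<t_1<\dots<t_r<t$, with edges $e_1,\dots,e_r$, so that $W_t=T_{s_{e_r}}\cdots T_{s_{e_1}}$. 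Applying the anti-homomorphism $i$ gives $i(W_t)=T_{s_{e_1}}\cdots T_{s_{e_r}}$, the reversed word. Conditionally on $r$ the sequence $(e_1,\dots,e_r)$ is i.i.d.\ uniform over the $n-1$ edges, hence exchangeable, so the reversed word has the same distribution as $W_t$; thus $i(W_t)\overset{(d)}{=}W_t$. Since $i(T_\omega)=T_{\omega^{-1}}$ we get $i(W_t)=T_{\Pi_t^{-1}}$, and comparing laws yields $\Pi_t\overset{(d)}{=}\Pi_t^{-1}$, i.e.\ $\pi_t^n\overset{(d)}{=}(\pi_t^n)^{-1}$.

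\emph{Part (2).} Build everything on one probability space carrying independent rate-$1$ clocks on the edges of $\mathbb{Z}_{>0}$; let $\pi_t$ be the infinite process and $\pi_t^N$ the finite one on positions $\{1,\dots,N\}$. Fix $k$ and $t$. The events $E_N=\{\text{no click on edge }N\text{ during }[0,t]\}$, $N\ge k$, are independent with probability $e^{-t}>0$, so by Borel--Cantelli there is almost surely a finite random $M\ge k$ with $E_M$. On $E_M$ the positions $\{1,\dots,M\}$ form a closed subsystem for $\pi_t$ and for every $\pi_t^N$ with $N\ge M$, so all these configurations coincide on $\{1,\dots,M\}$; since each of the particles $1,\dots,k$, and each particle occupying a position in $\{1,\dots,k\}$ at time $t$, never leaves $\{1,\dots,M\}$, we obtain $(\pi_t^N(1),\dots,\pi_t^N(k))=(\pi_t(1),\dots,\pi_t(k))$ and $((\pi_t^N)^{-1}(1),\dots,(\pi_t^N)^{-1}(k))=(\pi_t^{-1}(1),\dots,\pi_t^{-1}(k))$ for all $N\ge M$. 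Hence both $k$-tuples are the almost-sure eventual values of their finite-$N$ versions, which are equal in law by part (1) (as coordinate projections of $\pi_t^N\overset{(d)}{=}(\pi_t^N)^{-1}$); letting $N\to\infty$ and using that almost-sure convergence implies convergence in distribution gives the claim.

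The genuinely delicate point is the finite-to-infinite transfer in part (2): one needs the window approximations to stabilize \emph{almost surely}, not merely in distribution, so that part (1) can be invoked for a fixed finite $n$ without the conditioning on a clean edge introducing spurious correlations — the Harris / Borel--Cantelli localization is exactly what supplies this. The algebraic content of part (1), once the dictionary between the swap process and the Hecke walk and the anti-homomorphism $i$ are in hand, is essentially the statement that time-reversal of the driving clocks corresponds to inverting the permutation.
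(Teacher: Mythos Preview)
Your proof is correct and follows essentially the same approach as the paper: part~(1) via the Hecke anti-involution and exchangeability of the clock sequence, and part~(2) via the Harris/Borel--Cantelli localization to finite windows. The only minor difference is that for part~(2) you pass to the infinite process by an almost-sure limit along deterministic windows $N\to\infty$, whereas the paper's remark suggests conditioning on the random Harris partition and applying part~(1) segment-by-segment; your route is a touch cleaner in that it sidesteps any worry about conditioning on the random segment endpoints.
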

\begin{remark}
    The symmetry property for the finite case is a direct corollary of the property of the involution. For the infinite case, we need again the Harris' construction and apply the symmetry property to each segment, see also the proof of \cite[Thm 1.4]{amir2011tasep}. 
\end{remark}

\subsection{Properties of TASEP and the oriented swap process}
Our analysis of the oriented swap process relies on its connection with TASEP. In this section, we present some propositions about TASEP and how they relate to the oriented swap process. Firstly, we define certain operations on configurations of TASEP or the oriented swap process. A configuration $\rho \in \mathbb{Z}^{I}$ specifies the type of particle at each position and the set of all such configurations is denoted as $\mathcal{S}$.

\begin{definition}
    For any $k \in \mathbb{Z}$, and any configuration $\rho$, where $I$ is a finite or infinite interval, we define the projection operator $T_k : \mathcal{S} \rightarrow \mathcal{S}$ as 
    $$
    (T_k \rho)(x) = \mathbf{1}_{ \{ \rho(x) \leq k \} }.
    $$
    The operator projects a configuration of multi species particle system(e.g. the infinite oriented process) to a TASEP configuration on the same segment with only particles and holes.  
\end{definition}

\begin{theorem}\cite[Lem.~3.1]{angel2009oriented}
    Let $\pi_{t}^{n}$ be a finite oriented process with $n$ particles and $\sigma_{t}^{k,[1,n]}$ is a TASEP on the interval with k particles at the left as initial condition, then we have:
    $$T_k \pi_{t}^{n} = \sigma_{t}^{k,[1,n]}.$$
\end{theorem}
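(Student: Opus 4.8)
The plan is to use the graphical representation (Harris construction) to couple the finite oriented swap process $\pi_t^n$ and the TASEP $\sigma_t^{k,[1,n]}$ on the \emph{same} probability space, driven by the same family of Poisson clocks attached to the edges $\{i,i+1\}$ for $i = 1, \dots, n-1$. Once both processes read their swap instructions from the identical clock realization, the claim $T_k \pi_t^n = \sigma_t^{k,[1,n]}$ becomes a pathwise statement, and it suffices to verify it holds for all $t \geq 0$ almost surely on this coupled space.

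First I would set up notation: for a configuration $\rho$ of the multi-species process, call a particle ``heavy'' if its label is $> k$ and ``light'' if its label is $\leq k$; under $T_k$, light particles become TASEP-particles (value $1$) and heavy particles become TASEP-holes (value $0$). The initial conditions match by construction: at $t = 0$ the particles $1, \dots, k$ occupy positions $1, \dots, k$, so $T_k \pi_0^n = \mathbf{1}_{x \leq k} = \sigma_0^{k,[1,n]}(x)$ as functions on $[1,n]$. Then I would argue by induction on the successive clock-ring times $0 = t_0 < t_1 < t_2 < \dots$, showing that if $T_k \pi_{t_{j}}^n = \sigma_{t_j}^{k,[1,n]}$ then the equality persists through the next ring at time $t_{j+1}$. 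The heart of the induction is a case analysis at the edge $\{i, i+1\}$ where the clock rings: if both particles at positions $i, i+1$ in $\pi$ are heavy (both light), then in the TASEP we have a hole-hole (particle-particle) pair, and neither process changes the $T_k$-image — a swap in $\pi$ only permutes two particles of the same $T_k$-value. If the left particle is light and the right is heavy, then $\pi(i) < \pi(i+1)$ so the oriented swap fires, turning the $\pi$-pattern (light, heavy) into (heavy, light); simultaneously in the TASEP the pattern is (particle, hole), so the TASEP jump fires, giving (hole, particle) — the images agree. If the left is heavy and the right is light, then $\pi(i) > \pi(i+1)$ so the oriented swap is suppressed; in the TASEP the pattern is (hole, particle), for which no jump occurs either. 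In every case $T_k$ of the updated $\pi$-configuration equals the updated TASEP configuration.

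The one genuine subtlety — the main obstacle if there is one — is making sure the clocks are being used consistently: the oriented swap process fires a swap at edge $i$ precisely when $\pi(i) < \pi(i+1)$, whereas the TASEP fires when there is a particle at $i$ and a hole at $i+1$. These two ``fire conditions'' are \emph{not} literally the same event, but the case analysis above shows they agree exactly on the level of the $T_k$-projection: a clock ring produces a change in $T_k \pi$ if and only if it produces a change in $\sigma^{k,[1,n]}$, and when it does, the changes coincide. One should also note that the definition of the oriented swap and of the TASEP both use a ring that always \emph{attempts} an update and is suppressed when the configuration forbids it — so reading from the same Poisson clocks is legitimate and no extra thinning is needed. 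With these observations the induction closes and, since there are only finitely many clock rings in any bounded time interval almost surely, the identity $T_k \pi_t^n = \sigma_t^{k,[1,n]}$ holds for all $t$ with probability one, which is the assertion.

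Finally I would remark that essentially the same coupling, applied segment by segment via the Harris construction, gives the corresponding identity for the half-line process, namely $T_k \pi_t = \sigma_t^{k}$ on $\mathbb{Z}^+$ (with the appropriate half-line initial data), although the theorem as stated only needs the finite version; this is the form that will be invoked in Section~\ref{sec:Tra}.
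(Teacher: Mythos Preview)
Your argument is correct: the graphical coupling plus the four-case check at each clock ring is exactly the standard proof, and the remark about the half-line extension via Harris' construction is apt. Note, however, that the paper does not give its own proof of this statement; it is quoted verbatim as \cite[Lem.~3.1]{angel2009oriented}, so there is nothing in the present paper to compare against beyond observing that your approach is the same as the one in the cited source.
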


As a corollary, the oriented swap process can be completely characterized by a family of coupled TASEPs with different initial value conditions. The original swap process can be reconstructed by comparing $T_k \pi_{t}^{n} $ and $T_{k-1} \pi_{t}^{n} $, where the difference corresponds to the location of particle $k$ in the oriented swap process. 

\begin{definition}
    For any TASEP configuration $\rho$, i.e. $\rho(x) = 0$ if there is a hole at position $x$, $\rho(x) = 1$ if there is a particle at position $x$. The cut-off $R_k : \mathcal{S} \rightarrow \mathcal{S}$ is defined as an operator, which retains only the k rightmost particles of the configuration.
    $$
    (R_k \rho)(x) = \left \{ \begin{array}{cc}
        \rho(x), & \text{if \quad}  \sum_{y > x} \rho(y) < k, \\
        0,& \text{otherwise. \quad} 
    \end{array}
    \right .
    $$
\end{definition}

\begin{definition}\label{def1}
    For two different configurations $\rho$ and $\rho^{\prime}$, we say they are compatible if they differ at exactly one site, that is, there exists a unique $x$, such that $|\rho(x) - \rho^{\prime}(x) | = 1$. We denote the position of this difference as $\Sigma_{\rho, \rho^{\prime}}$. 
\end{definition}

An example of compatibility is the so-called second class particle in TASEP. We consider two TASEP $\sigma_{t}^{0}$ and $\sigma_{t}^{-1}$ driven by the same family of Poisson process, meaning their particles are trying to swap simultaneously. Through coupling, these systems remain compatible as time progresses. Let $X(t) = \Sigma_{\sigma_{t}^{0}, \sigma_{t}^{-1}}$. It was proven in \cite{mountford2005motion} that $\frac{X(t)}{t}$ converges to a uniform random variable almost surely. Furthermore, this result holds in a more general sense. 

\begin{theorem}\cite[Thm.~1]{mountford2005motion}\label{thm2.1}
    Let U be a random variable uniformly distributed on $[-1,1]$, Let $X(t)$ be the location of the second class particle, then we have: 
    $$
    \lim_{t \to \infty }\frac{X(st)}{t} = \lim_{t \to \infty }\frac{\Sigma_{\sigma_{st}^{0}, \sigma_{st}^{-1}}}{t} \to U \times s.
    $$
    in the sense of convergence in distribution with respect to the uniform topology on functions on positive half line.
\end{theorem}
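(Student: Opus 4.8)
The plan is to combine a soft, hydrodynamics-based computation of the marginal law of $X(t)$ with the nontrivial fact that the rescaled trajectory $t\mapsto X(t)/t$ has an almost sure limit, and then to package the latter into the uniform topology by an elementary domination.

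First I would fix $s$ and compute the one-dimensional limit. Run $\sigma^{0}_{t}$ and $\sigma^{-1}_{t}$ in the basic coupling. Since $\mathbf{1}_{x\le 0}\ge\mathbf{1}_{x\le -1}$ and TASEP is attractive, the coupling preserves the ordering and the two configurations differ at exactly one site, which is $X(t)$ (with $\sigma^{0}_{t}(X(t))=1$, $\sigma^{-1}_{t}(X(t))=0$). Hence
\[
\mathbb{P}(X(t)>x)=\mathbb{E}\Big[\sum_{y>x}\big(\sigma^{0}_{t}(y)-\sigma^{-1}_{t}(y)\big)\Big].
\]
Because TASEP commutes with spatial shifts and $\mathbf{1}_{x\le -1}(y)=\mathbf{1}_{x\le 0}(y+1)$, we have $\sigma^{-1}_{t}(\cdot)\overset{(d)}{=}\sigma^{0}_{t}(\cdot+1)$, so $\mathbb{E}[\sum_{y>x}\sigma^{-1}_{t}(y)]=\mathbb{E}[\sum_{z>x+1}\sigma^{0}_{t}(z)]$ (all series are a.s.\ finite and in $L^1$ for $x>-t$, since only order $t$ particles have crossed the origin by time $t$). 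Subtracting telescopes to $\mathbb{P}(X(t)>x)=\mathbb{E}[\sigma^{0}_{t}(y_0)]$, where $y_0$ is the unique integer in $(x,x+1]$. Now put $x=vst$ and let $t\to\infty$: the rarefaction-fan hydrodynamic limit for the step initial condition gives $\mathbb{E}[\sigma^{0}_{st}(y_0)]\to\rho(v)$, and monotonicity of $y\mapsto\mathbb{E}[\sigma^{0}_{t}(y)]$ (from attractiveness together with shift-monotonicity of the initial data) promotes the weak hydrodynamic statement to this pointwise limit at every $v\in(-1,1)$. Since the TASEP flux is $\rho(1-\rho)$, the fan profile is $\rho(v)=(1-v)/2$ on $[-1,1]$, which equals $\mathbb{P}(U>v)$ for $U\sim\mathrm{Unif}[-1,1]$; for $|v|\ge 1$ the probability tends to $1$ or $0$. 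This gives the marginal convergence $X(st)/t\Rightarrow Us$ for each fixed $s$.

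The real content is the joint statement, and for that it suffices to establish the almost sure convergence $X(t)/t\to U$, a single variable governing the whole path. Here I would pass to the last-passage-percolation picture: with i.i.d.\ $\mathrm{Exp}(1)$ weights on $\mathbb{Z}_{>0}^2$ and last-passage times $G$, the step-initial-condition TASEP is encoded by the level curves of $G$, and the trajectory of the second class particle added at the origin becomes the competition interface separating the sites whose geodesic from the corner begins with a right step from those whose geodesic begins with an up step. The shape theorem $G(\lfloor xN\rfloor,\lfloor yN\rfloor)/N\to(\sqrt{x}+\sqrt{y})^2$, strict concavity of the limit shape, and a.s.\ coalescence/uniqueness of semi-infinite geodesics force the competition interface to have an a.s.\ asymptotic direction; the Burke property of the stationary model then identifies the law of that direction, which under the LPP--TASEP dictionary is exactly $X(t)/t\to U$ a.s. (This is the substance of \cite{mountford2005motion}; a purely TASEP-based alternative sandwiches the second class particle between two macroscopically determined shock positions via microscopic concavity.) In particular the finite-dimensional distributions of $s\mapsto X(st)/t$ converge to those of $s\mapsto Us$.

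Finally I would upgrade to the process level. Coupling $X$ with the pure right-drift walker $\widetilde{R}$ (stepping right at every clock ring on its right edge) and $-\widetilde{L}$ (symmetric), attractiveness gives $-\widetilde{L}(t)\le X(t)\le\widetilde{R}(t)$ with $\widetilde{R},\widetilde{L}$ rate-$1$ Poisson processes, hence $\sup_{s\le T}|X(st)|/t\le\max(\widetilde{R}(Tt),\widetilde{L}(Tt))/t\to T$ a.s.; this also shows the limit process is continuous. Then, for fixed $T$ and $\varepsilon>0$, split $[0,T]$ at $\varepsilon$: on $[\varepsilon,T]$, $|X(st)/t-sU|=s|X(st)/(st)-U|\le T\sup_{r\ge\varepsilon t}|X(r)/r-U|\to 0$ a.s.\ by the a.s.\ limit; on $[0,\varepsilon]$, $|X(st)/t-sU|\le\max(\widetilde{R}(\varepsilon t),\widetilde{L}(\varepsilon t))/t+\varepsilon\to 2\varepsilon$. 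Letting $\varepsilon\downarrow 0$ yields $\sup_{s\le T}|X(st)/t-sU|\to 0$ a.s.\ for every $T$, i.e.\ a.s.\ convergence in the uniform topology on $[0,\infty)$, hence convergence in distribution. The main obstacle is the middle step: going from the soft marginal to the existence of an a.s.\ limiting velocity and the degeneracy of the joint law genuinely requires the shape theorem and geodesic coalescence (or microscopic concavity) --- strictly more than the law of large numbers for the current --- whereas the shift-coupling identity, the pointwise hydrodynamics via monotonicity, and the Poisson domination are all routine.
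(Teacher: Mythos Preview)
The paper does not prove Theorem~\ref{thm2.1}; it is quoted from \cite{mountford2005motion} as a background result in Section~\ref{sec:Pre}, with no argument supplied. There is therefore no proof in the paper to compare your proposal against.

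As a standalone sketch your outline is sound, but it conflates several distinct sources. The marginal identity $\mathbb{P}(X(t)>x)=\mathbb{E}[\sigma^{0}_{t}(\lfloor x\rfloor+1)]$ obtained by shift-coupling and telescoping is the Ferrari--Kipnis argument, which yields convergence in distribution only. The competition-interface route to the almost sure limit---LPP representation, shape theorem, geodesic coalescence---is Ferrari--Pimentel, not \cite{mountford2005motion}; the ``purely TASEP-based alternative'' you mention at the end (sandwiching the second class particle between two macroscopic shock positions via microscopic concavity) is in fact closer to what Mountford and Guiol actually do. Your final step, promoting the a.s.\ limit $X(t)/t\to U$ to uniform convergence of $s\mapsto X(st)/t$ on compacts by Poisson domination near $s=0$ and the a.s.\ limit on $[\varepsilon,T]$, is correct and elementary. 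There is no genuine mathematical gap in the outline, but be aware that you are invoking rather than proving the hard part (existence of the a.s.\ asymptotic velocity), and that attributing the LPP/geodesic argument to \cite{mountford2005motion} is historically inaccurate.
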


The operators $R_k$ and $R_{k-1}$ preserve the compatibility. 
\begin{theorem}\cite[Lem.~7.1]{angel2009oriented}\label{thm2.2}
    Let $\rho, \rho^{\prime}$ be two compatible configurations, then $R_{k}(\rho),R_{k-1}(\rho^{\prime})$ are also compatible. More precisely, 
    $$\Sigma_{R_{k}(\rho),R_{k-1}(\rho^{\prime})} = \Sigma_{\rho, \rho^{\prime}} \vee d_{\rho}(k),$$ 
    where $d_{\rho}(j)$ denotes the position of the j-th rightmost particle of configuration $\rho$, $a \vee b = a$ if $a > b$, otherwise $a \vee b = b$. 
\end{theorem}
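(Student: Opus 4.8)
The plan is to make the action of $R_k$ and $R_{k-1}$ fully explicit on particle positions and then compare the two resulting configurations site by site. Write $x := \Sigma_{\rho,\rho'}$ for the unique site of disagreement. The operator pair $R_k,R_{k-1}$ is tailored to the situation in which $\rho$ carries one more particle than $\rho'$ — this is exactly the configuration produced by the applications (cf.\ the second class particle picture, where one passes from $\sigma^{j}$ to $\sigma^{j-1}$) — so I take $\rho(x)=1$ and $\rho'(x)=0$. Since the configurations occurring here have only finitely many particles to the right of any site, I may list the particle positions of $\rho$ in decreasing order as $p_1 > p_2 > \cdots$, so that $d_\rho(j)=p_j$; let $m$ be the index with $x=p_m$. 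Then the particle positions of $\rho'$, in decreasing order, are obtained from $(p_i)$ by deleting $p_m$.

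Assume $\rho$ has at least $k$ particles (otherwise $R_k$ acts as the identity and the statement is immediate). Then $R_k(\rho)$ has particles exactly at $P:=\{p_1,\dots,p_k\}$ and holes elsewhere, while $R_{k-1}(\rho')$ has particles exactly at the $k-1$ largest elements of $(p_i)_{i\ne m}$. The one routine computation is to identify this last set: if $m\le k-1$ it equals $\{p_1,\dots,p_{m-1},p_{m+1},\dots,p_k\}=P\setminus\{p_m\}$, and if $m\ge k$ it equals $\{p_1,\dots,p_{k-1}\}=P\setminus\{p_k\}$; in both cases it is $P\setminus\{p_{\min(m,k)}\}$.

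It then remains to read off the conclusion. The configurations $R_k(\rho)$ and $R_{k-1}(\rho')$ carry particles at $P$ and at $P\setminus\{p_{\min(m,k)}\}$ respectively, and holes everywhere else; in particular both have a hole at $x=p_m$ in the case $m>k$, where $p_m$ is discarded by $R_k$ and is not a particle of $\rho'$ at all. Hence the two configurations disagree at the single site $p_{\min(m,k)}$, which proves compatibility, and since a smaller index among the $p_i$ means a larger position, $p_{\min(m,k)}=p_m\vee p_k=\Sigma_{\rho,\rho'}\vee d_\rho(k)$, which is the asserted identity.

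The computation itself is elementary and presents no real obstacle. The points that need care are the bookkeeping around the boundary case $m=k$, so that the two sub-cases glue into the uniform answer $P\setminus\{p_{\min(m,k)}\}$, and keeping the roles straight — that $\rho$ (which carries the extra particle) receives $R_k$ while $\rho'$ receives $R_{k-1}$ — since this is exactly what forces the output to be compatible again. A tidier way to package the same argument uses the height function $h_\rho(y)=\#\{z>y:\rho(z)=1\}$: one has $h_{\rho'}=h_\rho$ to the right of $x$ and $h_{\rho'}=h_\rho-1$ to its left, so comparing the membership conditions ``$\rho(y)=1$ and $h_\rho(y)\le k-1$'' with ``$\rho'(y)=1$ and $h_{\rho'}(y)\le k-2$'' shows that the only possible site of disagreement is $y=x$ (present precisely when $h_\rho(x)\le k-1$, i.e.\ $m\le k$) or $y=d_\rho(k)$ (present precisely when $h_\rho(x)\ge k$, i.e.\ $m>k$) — and exactly one of these two alternatives holds.
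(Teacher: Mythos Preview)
Your proposal is correct and follows essentially the same two-case approach as the paper's sketch proof: distinguishing whether the discrepancy lies among the $k-1$ rightmost particles of $\rho$ (your case $m\le k-1$) or not (your case $m\ge k$), with the boundary $m=k$ handled cleanly by your uniform formula $P\setminus\{p_{\min(m,k)}\}$. Your explicit assumption that $\rho(x)=1$, $\rho'(x)=0$ is indeed the correct orientation required by the statement and by its application in the paper, and the height-function reformulation you add at the end is a neat alternative packaging of the same case split.
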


\begin{remark}
    The sketch proof of the above theorem is as follows. Intuitively, if the difference between $\rho$ and $\rho^{\prime}$ lies among the $r-1$ particles on the right, the difference will remain unchanged after applying the cut-off operator. If the $r-1$ particles on the right side of $\rho$ and $\rho^{\prime}$ are identical, then the difference after applying the cut-off operator is exactly the r-th rightmost particle in configuration $\rho$.
\end{remark}

The hydrodynamic limit result of TASEP characterizes its large-time behavior.
\begin{theorem}\cite[Thm.~1]{rost1981non}
    Let $u,v \in \mathbb{R}$. We have
    $$
    \lim_{t \to \infty} \frac{1}{t} \sum_{ut < j < vt} \sigma_{t}^{0}(j) = \int_{u}^{v} h(x) dx,
    $$
    where
    $$
    h(x) = \left \{ \begin{array}{cc}
        1, & \text{if \quad}  x < 1,  \\
        \frac{1-x}{2},& \text{if \quad} -1\leq x \leq 1,\\
        0, & \text{if \quad}  x > 1.
    \end{array}
    \right .
    $$
\end{theorem}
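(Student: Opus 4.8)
The plan is to treat the statement as Rost's hydrodynamic limit for TASEP with step initial condition and to reduce it to a shape theorem for last passage percolation (LPP). First I would pass to the graphical construction of $\sigma^{0}_{t}$ and label the particles $\dots, x_{2}(t), x_{1}(t)$ from right to left with $x_{i}(0) = 1-i$. In the standard TASEP--LPP dictionary, if $\omega_{i,j}$ ($i,j \ge 1$, i.i.d.\ $\mathrm{Exp}(1)$) records the $j$-th jump-waiting-time consumed by particle $i$ and $G(p,q) = \max_{\gamma}\sum_{(i,j)\in\gamma}\omega_{i,j}$ is the last passage time over up-right paths inside $\{1,\dots,p\}\times\{1,\dots,q\}$, then $\{x_{i}(t) \ge m\}$ equals an event of the form $\{G(i, m') \le t\}$ with $m'$ affine in $i,m$. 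Consequently the integrated particle count $N_{t}(m) := \#\{i : x_{i}(t) \ge m\}$ is determined by the level sets of $G$, and the window count $\sum_{ut<j<vt}\sigma^{0}_{t}(j)$ is, up to the trivially saturated stretch (where $\sigma^{0}_{t}\equiv 1$ contributes density $1$) and $O(1)$ boundary terms, the difference $N_{t}(\lceil ut\rceil) - N_{t}(\lfloor vt\rfloor)$. So it suffices to understand $G$ on the macroscopic scale.

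Second, I would use superadditivity: concatenating an optimal path in a $p\times q$ box with a shifted optimal path in a $p'\times q'$ box gives $G(p+p',q+q') \ge G(p,q) + \widetilde{G}(p',q')$, so the superadditive ergodic theorem produces, for each fixed $(a,b)\in\mathbb{R}_{>0}^{2}$, a deterministic limit $t^{-1}G(\lfloor at\rfloor, \lfloor bt\rfloor) \to g(a,b)$ almost surely and in $L^{1}$, with $g$ concave and positively $1$-homogeneous. The only genuinely hard step is the exact identification $g(a,b) = (\sqrt{a}+\sqrt{b})^{2}$. I would obtain it either by exact solvability --- the one-point law of $G(p,q)$ is explicit (via Burke's theorem applied to a stationary TASEP with i.i.d.\ boundary rows, or via the Johansson determinantal formula) and its law of large numbers gives the shape --- or by a softer route: $1$-homogeneity, concavity, the symmetry $g(a,b)=g(b,a)$, the boundary value $g(a,0)=a$ (strong law for a single row of $\mathrm{Exp}(1)$'s), and one additional data point from the stationary-TASEP current $\rho(1-\rho)$ together force the constant $\lambda=2$ in the admissible family $g=a+b+\lambda\sqrt{ab}$.

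Third, I would translate the shape back into the density profile. At scale $i=\beta t$, $m=\xi t$, the event $\{x_{i}(t)\ge m\}$ becomes $\{g(\beta,\xi')\le 1\}$, so the macroscopic particle positions are read off the level curve $\{g=1\}$ and the local density at $x$ is the reciprocal of its slope there; with $g(a,b)=(\sqrt a+\sqrt b)^{2}$ a one-line computation gives exactly $h$ --- density $1$ in the still-saturated region $x<-1$, the rarefaction fan $(1-x)/2$ on $[-1,1]$, and $0$ ahead of the front $x>1$. Integrating $h$ over $(u,v)$ and expressing the window count through $N_{t}(\lceil ut\rceil)-N_{t}(\lfloor vt\rfloor)$ then yields $\int_{u}^{v}h(x)\,dx$. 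A remaining routine point is upgrading a.s.\ convergence of $t^{-1}G$ at fixed arguments to convergence uniform over a compact range of ratios, which follows from monotonicity of $G(p,q)$ in each argument by a finite sandwiching/mesh argument. An alternative, LPP-free proof would instead invoke the general hydrodynamic machinery: attractiveness of TASEP and its product-Bernoulli stationary measures of flux $\rho(1-\rho)$ identify the limit as the unique entropy solution of $\partial_{t}\rho+\partial_{x}(\rho(1-\rho))=0$ with step data, whose rarefaction fan is precisely $h$. In either route the \emph{main obstacle} is the same --- pinning down the explicit macroscopic object: the shape function $g$ in the LPP approach, or, equivalently, verifying that the limit is the entropy (not merely a weak) solution in the PDE approach.
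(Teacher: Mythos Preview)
The paper does not prove this theorem. It is stated in the preliminaries section as a cited result from Rost (1981) and is used only as a black box (to obtain Corollary~\ref{thm2.4} on the position of the $yt$-th rightmost particle). There is therefore no ``paper's own proof'' to compare your attempt against.

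As a standalone sketch, your LPP route is the standard modern proof and is essentially correct. One genuine gap, however: your ``softer route'' to identifying the shape function does not work as written. The constraints you list --- $1$-homogeneity, concavity, symmetry, and $g(a,0)=a$ --- do \emph{not} force $g$ into the one-parameter family $a+b+\lambda\sqrt{ab}$. A $1$-homogeneous concave symmetric $g$ with $g(a,0)=a$ is determined by a concave symmetric function $f$ on $[0,1]$ with $f(0)=f(1)=1$ via $g(a,b)=(a+b)f\!\bigl(\tfrac{a}{a+b}\bigr)$, and there are infinitely many such $f$ besides $f(t)=1+\lambda\sqrt{t(1-t)}$; for instance $f(t)=1+c\,t(1-t)$ gives $g(a,b)=a+b+\tfrac{c\,ab}{a+b}$. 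So the identification $g(a,b)=(\sqrt a+\sqrt b)^2$ genuinely requires the exact-solvability input you mention first (the Burke/stationary argument, or Johansson's determinantal formula), or else the full entropy-solution machinery in your alternative route. With that correction the plan is sound.
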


As a corollary of the hydrodynamical limit of TASEP, we can approximate the position of the k-rightmost particle at large times, which aids in establishing the convergence to the limiting trajectories.
\begin{corollary}\cite{angel2009oriented}\label{thm2.4}
    Recall that $d_{\rho}(j)$ denotes the position of the j-th rightmost particle of configuration $\rho$.For any $x > y >0$, the position $yt$-rightmost particle of TASEP $\sigma_{xt}^{yt}$ divided by $t$ converge in probability, 
    $$
    \lim_{t \to \infty} \frac{1}{t} d_{\sigma_{xt}^{yt}}(yt) = y + x -2\sqrt{xy}.
    $$
\end{corollary}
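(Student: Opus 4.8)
The plan is to reduce everything to the step initial-condition TASEP $\sigma^{0}$, read the limiting spatial density of particles off Rost's hydrodynamic limit, and then convert this Eulerian (density) statement into the Lagrangian statement about the tagged ($yt$-th rightmost) particle by inverting a monotone counting function.

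First I would use translation invariance of the graphical construction to write $\sigma^{yt}_{xt}(j) \overset{(d)}{=} \sigma^{0}_{xt}(j-\lfloor yt\rfloor)$, so that for every $m$ the number of particles of $\sigma^{yt}_{xt}$ strictly to the right of $m$ equals the number of particles of $\sigma^{0}_{xt}$ strictly to the right of $m-\lfloor yt\rfloor$. Setting $N_t(c) := \#\{j > ct : \sigma^{yt}_{xt}(j)=1\}$ and applying the hydrodynamic limit with time parameter $xt$ on the window $(\,(c-y)t,\ 2xt\,)$ — together with the elementary fact that the front of $\sigma^{0}$ is stochastically dominated by a rate-$1$ Poisson clock and hence lies below $2xt$ with probability tending to $1$ — I would obtain, for every $c$ with $(c-y)/x \in [-1,1]$,
$$
\frac{1}{t} N_t(c) \ \xrightarrow{\ \mathbb{P}\ }\ x \int_{(c-y)/x}^{1} h(\xi)\,d\xi \ =\ \frac{x}{4}\Big(1 - \frac{c-y}{x}\Big)^{2} \ =:\ G(c),
$$
while $G(c) = y-c$ for $c \le y-x$ and $G(c) = 0$ for $c \ge y+x$; in all cases $G$ is continuous, non-increasing, and strictly decreasing wherever it is positive.

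Then I would locate the tagged particle through the monotone identity $\{\,d_{\sigma^{yt}_{xt}}(yt) > ct\,\} = \{\,N_t(c) \ge \lfloor yt\rfloor\,\}$. Since $x > y$, one has $G(y-x)=x>y$ and $G(y+x)=0<y$, so $G(c)=y$ has a unique solution, necessarily in the quadratic regime $(y-x,y+x)$; solving $\tfrac{x}{4}(1-(c-y)/x)^2 = y$ with $(c-y)/x \le 1$ gives $(c-y)/x = 1-2\sqrt{y/x}$, i.e. $c^{\ast} = x+y-2\sqrt{xy}$. For $c<c^{\ast}$ we have $G(c)>y$, hence $\mathbb{P}(N_t(c)\ge \lfloor yt\rfloor)\to 1$ and so $\mathbb{P}(d_{\sigma^{yt}_{xt}}(yt) > ct)\to 1$; for $c>c^{\ast}$ we have $G(c)<y$, hence $\mathbb{P}(d_{\sigma^{yt}_{xt}}(yt)\le ct)\to 1$. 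Combining the two gives $\tfrac1t\, d_{\sigma^{yt}_{xt}}(yt) \to c^{\ast}$ in probability, as claimed.

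The main obstacle is the step from the finite-window form of the hydrodynamic limit to the tail count $N_t(c)$, which requires ruling out escape of particle mass above the cutoff $2xt$; this is exactly what the Poisson domination of the front provides, and once $\tfrac1t N_t(c)\to G(c)$ is in place the remaining inversion is routine monotonicity. A secondary bookkeeping point is verifying that the relevant root $(\sqrt{x}-\sqrt{y})^{2}$ lies in the range $(y-x,y+x)$ where $G$ takes its quadratic form, which again uses $x>y$.
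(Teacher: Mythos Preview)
Your argument is correct and is precisely the derivation the paper has in mind: the corollary is stated without proof, cited from \cite{angel2009oriented}, and introduced as ``a corollary of the hydrodynamical limit of TASEP,'' i.e.\ of Rost's theorem stated immediately above. Your translation to $\sigma^{0}$, computation of $G(c)=\tfrac{x}{4}\bigl(1-\tfrac{c-y}{x}\bigr)^{2}$ in the fan, and inversion via the monotone identity $\{d_{\sigma^{yt}_{xt}}(yt)>ct\}=\{N_t(c)\ge \lfloor yt\rfloor\}$ to obtain $c^{\ast}=(\sqrt{x}-\sqrt{y})^{2}$ is exactly the intended route, with the Poisson domination of the front handling the only nontrivial technical point.
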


\subsection{GUE corners process}
The GUE corners process appeared first in the setup of Gaussian unitary ensemble(GUE). 

\begin{definition}
    The Gaussian Unitary Ensemble is an ensemble of random Hermitian matrices, in which the upper-triangular entries are i.i.d. with complex standard Gaussian random variables, and on the diagonal, there are i.i.d. real standard Gaussian random variables. 
\end{definition}

\begin{definition}
     Let $H$ be a GUE random matrix with infinitely many rows and columns, $H_l$ be a $l \times l$ minor of $H$ and $\lambda_l^i$ be its $i$-th-largest eigenvalue of $H_l$. The vector $(\lambda_1^1, \lambda_2^1, \dots \lambda_l^1)$ is called the GUE corners process. 
\end{definition}

The GUE corners process also arises in other probabilistic models within the KPZ universality class. Baryshnikov established the following convergence result for the swapping times in TASEP.

\begin{theorem}\cite[Thm.~0.7]{baryshnikov2001gues}
    Consider a TASEP with step initial condition, let $x(l,n)$ be the time that the $l$-rightmost particle swap with position $n$. Then,
    $$\lim_{n \to \infty}\left(\frac{x(1,n) - n}{\sqrt{n}} ,\frac{x(2,n) - n}{\sqrt{n}},\dots \frac{x(1,n) - n}{\sqrt{n}}\right) \overset{(d)}{=} (\lambda_1^1, \lambda_2^1, \dots \lambda_l^1).$$ 
\end{theorem}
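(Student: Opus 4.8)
This is Baryshnikov's theorem, so the plan is to reproduce its proof: first rewrite the swapping times via the last-passage / queueing representation of step--initial--condition TASEP, then pass to the diffusive limit, and finally identify the limiting functional with the top eigenvalues of the successive GUE corners. For the first step, recall the standard fact that, if $w_{i,j}$ ($i,j\ge 1$) are i.i.d.\ $\mathrm{Exp}(1)$ random variables modelling the successive inter-jump times of the $i$-th particle, then unrolling the $\max$-plus recursion $t_{i,m}=\max(t_{i,m-1},t_{i-1,m})+w_{i,m}$ that governs step-IC TASEP gives, up to an index shift that is negligible in the limit,
\[
x(l,n)=\max_{0=m_0\le m_1\le\cdots\le m_{l-1}\le m_l=n}\ \sum_{k=1}^{l}\ \sum_{j=m_{k-1}+1}^{m_k} w_{k,j}.
\]
The crucial point is that this holds \emph{simultaneously} for all $l$ on one and the same array: increasing $l$ only appends the row $(w_{l+1,j})_{j}$, so the whole vector $(x(1,n),\dots,x(l,n))$ is a function of rows $1,\dots,l$. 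This is the only input about TASEP the argument needs, and it can be taken from the standard exponential last-passage coupling of step-IC TASEP.

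Next, writing $S_k(m)=\sum_{j\le m}(w_{k,j}-1)$ for the centred partial sums, the identity above reads $x(l,n)-n=\Phi_l(S_1(\lfloor n\,\cdot\,\rfloor),\dots,S_l(\lfloor n\,\cdot\,\rfloor))+O(1)$, where
\[
\Phi_l(f_1,\dots,f_l)=\sup_{0=u_0\le u_1\le\cdots\le u_{l-1}\le u_l=1}\ \sum_{k=1}^{l}\bigl(f_k(u_k)-f_k(u_{k-1})\bigr)
\]
on $C[0,1]^l$ with the uniform norm. Since $\mathrm{Exp}(1)$ has unit variance, Donsker's theorem and independence of the rows give $\bigl(S_1(\lfloor n\,\cdot\,\rfloor)/\sqrt n,\dots,S_l(\lfloor n\,\cdot\,\rfloor)/\sqrt n\bigr)\Rightarrow(B_1,\dots,B_l)$, independent standard Brownian motions, in the uniform topology. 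As $\Phi_l$ is Lipschitz for the uniform norm (perturbing each $f_k$ by $\varepsilon$ changes $\Phi_l$ by at most $2l\varepsilon$), the continuous mapping theorem yields
\[
\Bigl(\tfrac{x(1,n)-n}{\sqrt n},\dots,\tfrac{x(l,n)-n}{\sqrt n}\Bigr)\ \xrightarrow{(d)}\ (D_1,\dots,D_l),\qquad D_k:=\Phi_k(B_1,\dots,B_k),
\]
where every $D_k$ is built from the \emph{same} Brownian motions $B_1,\dots,B_k$ --- this is the O'Connell--Yor / Baryshnikov Brownian functional.

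It then remains to show $(D_1,\dots,D_l)\overset{(d)}{=}(\lambda^1_1,\dots,\lambda^1_l)$. One route is O'Connell's Brownian analogue of the Robinson--Schensted--Knuth correspondence (a Pitman-type transform): iterating the path operations defining $D_1,\dots,D_l$ transforms $(B_1,\dots,B_l)$ into an $h$-transform of Brownian motion with $h$ the Vandermonde determinant, i.e.\ Dyson Brownian motion started from the origin, whose top coordinate at time $1$ is the largest eigenvalue of the $l\times l$ GUE; moreover the quantities $D_1,\dots,D_{l-1}$ produced along the way are exactly the largest eigenvalues of the successive $(l-1)$-, $(l-2)$-, $\dots$, $1$-dimensional principal corners, with the correct joint law. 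A more computational alternative is to take Johansson's determinantal formula for the joint law of $(x(1,n),\dots,x(l,n))$, rescale, and verify that the correlation kernel converges to the (Gaussian) kernel of the GUE corners process.

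Steps one and two are routine once the last-passage coupling is available, the diffusive limit there being an honest application of Donsker plus continuous mapping. The substantive part is the last step, and within it the delicate point is not the single-time identity $D_l\overset{(d)}{=}\lambda^1_l$ but the \emph{joint} statement over all $l$: one must track that a single source of randomness produces a consistent family whose law coincides with the Gibbs/corners consistency of the GUE minors. I expect this bookkeeping --- equivalently, the proof of Baryshnikov's theorem in its refined (corners) form --- to be the main obstacle.
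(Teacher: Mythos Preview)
The paper does not prove this statement: it is quoted as a known result from \cite{baryshnikov2001gues} and used as a black box to derive Corollary~\ref{thm2.5}. There is therefore no ``paper's own proof'' to compare against.

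That said, your sketch is a faithful outline of the standard argument. The last-passage representation of $x(l,n)$, the Donsker-plus-continuous-mapping passage to the Brownian functional $\Phi_l(B_1,\dots,B_l)$, and the identification of $(D_1,\dots,D_l)$ with $(\lambda_1^1,\dots,\lambda_l^1)$ via the O'Connell--Yor/RSK path transform are exactly the ingredients of Baryshnikov's proof. You are also right that the substantive content lies in the third step, and specifically in the \emph{joint} identification across all $l$; the single-level statement $D_l\overset{(d)}{=}\lambda_l^1$ is easier (Gravner--Tracy--Widom, or the Hermite kernel computation), whereas the corners consistency is what Baryshnikov's queueing/RSK construction actually delivers. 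If you were to flesh this out, the point requiring the most care is verifying that the iterated Pitman-type transforms produce precisely the interlacing structure of the GUE minors, not merely a process with the correct top marginal at each level.
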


As a corollary, the positions of TASEP particles, after scaling, converge to a variant of the GUE corners process.
\begin{corollary}\label{thm2.5}
    For a fixed $l$, the joint distribution of locations of first $l$ particles from the right in a TASEP with step initial condition converges,
    $$\lim_{t \to \infty}\left(\frac{\sigma_{t}^{0}(1) - t}{\sqrt{t}}, \frac{\sigma_{t}^{0}(2) - t}{\sqrt{t}}, \dots \frac{\sigma_{t}^{0}(l) - t}{\sqrt{t}}\right) \overset{(d)}{=} (\lambda_1^1, \lambda_2^2, \dots \lambda_l^l).$$ 
\end{corollary}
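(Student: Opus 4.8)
The plan is to deduce the statement about particle positions at time $t$ directly from Baryshnikov's theorem about the passage times $x(l,n)$, by exploiting the standard duality in TASEP with step initial condition: the $l$-th rightmost particle has passed position $n$ by time $t$ if and only if its passage time $x(l,n)$ is at most $t$. First I would fix $l$ and write down this event-equality precisely: for the step initial condition (particles at all nonpositive integers, say, or at positions $\le 0$, with the labelling so that particle $1$ is the rightmost), one has $\{\sigma^{0}_t(j) \ge n\} = \{x(j,n) \le t\}$ for each $j$, since $\sigma^0_t(j)$ is non-decreasing in $t$ and increases by one exactly at the times $x(j,\cdot)$. Equivalently, $\sigma^0_t(j) = \max\{n : x(j,n) \le t\}$. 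This lets me translate a statement about the law of the vector of positions at a \emph{fixed large time} into a statement about the law of the vector of passage times \emph{to a fixed large level}.

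Next I would set up the scaling. Baryshnikov gives, for the level $n \to \infty$,
\[
\left(\frac{x(1,n)-n}{\sqrt{n}}, \dots, \frac{x(l,n)-n}{\sqrt{n}}\right) \xrightarrow{(d)} (\lambda_1^1,\dots,\lambda_l^l),
\]
where I read the right-hand side as the vector of top eigenvalues of the nested corners (the excerpt's indexing $\lambda^1_1,\dots,\lambda^l_l$ is the diagonal of the corners array, i.e. the largest eigenvalue of each successive minor). I want to invert the maps $n \mapsto x(j,n)$ around the diagonal point $n \approx t$. The heuristic is that $x(j,n) \approx n + c\sqrt{n}$ means $\sigma^0_t(j) \approx t - c'\sqrt{t}$ with $c' \to c$, because to leading order $\sqrt{n} \approx \sqrt{t}$ when $n = t + O(\sqrt{t})$. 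Concretely I would show: for any fixed $a \in \mathbb{R}^l$,
\[
\mathbb{P}\!\left(\frac{\sigma^0_t(j) - t}{\sqrt{t}} \le a_j,\ \forall j\right)
= \mathbb{P}\!\left(\sigma^0_t(j) \le t + a_j\sqrt{t},\ \forall j\right)
= \mathbb{P}\!\left(x(j, \lceil t + a_j\sqrt{t}\rceil + 1) > t,\ \forall j\right),
\]
and then, writing $n_j = \lceil t + a_j\sqrt{t}\rceil + 1$ so that $n_j \to \infty$ and $(t - n_j)/\sqrt{n_j} \to -a_j$, apply Baryshnikov's limit at the (slightly level-dependent, but asymptotically common) scale $\sqrt{n_j} \sim \sqrt{t}$ to get that this probability converges to $\mathbb{P}(-\lambda_j^j \ge -a_j\ \forall j) = \mathbb{P}(\lambda_j^j \le a_j\ \forall j)$. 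Since the $\lambda^j_j$ are reflection-symmetric is not needed; what is needed is just that $(\lambda^j_j)_j \overset{(d)}{=} (-\lambda^j_j)_j$ after the sign bookkeeping from $x(j,n)-n$ versus $\sigma^0_t(j)-t$, which I would handle by noting the GUE corners array is symmetric under $H \mapsto -H$.

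The main obstacle is the uniform control needed to let the different levels $n_j$ all be replaced by a common scale $t$: one must check that $\frac{x(j,n_j)-n_j}{\sqrt{n_j}}$ and $\frac{x(j,n_j)-n_j}{\sqrt{t}}$ have the same limit, which follows from $\sqrt{n_j}/\sqrt{t} \to 1$, and more importantly that the joint convergence in Baryshnikov's theorem is robust to the levels being $n_j = t + O(\sqrt{t})$ rather than literally equal — this is fine because $n_j \to \infty$ and the limiting law does not depend on the precise sequence, but one should phrase Baryshnikov's result as joint convergence of the whole rescaled vector (which it is) and invoke continuity of the limiting distribution function (the GUE corners law is absolutely continuous, so there are no atoms and convergence of the joint CDF at every continuity point suffices). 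A secondary technical point is making sure the integer-rounding $\lceil \cdot \rceil$ and the "$+1$" in passing between "$\le n$" and "$> n$" do not affect the $\sqrt{t}$-scale limit, which is immediate since a $\pm 1$ shift is $o(\sqrt{t})$. Once these are in place, the convergence of all finite-dimensional CDFs at continuity points, together with the absence of atoms in the limit, gives convergence in distribution of the vector, completing the proof.
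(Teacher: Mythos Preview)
Your approach is the same as the paper's: use the duality $\{\sigma^0_t(j)\ge n\}=\{x(j,n)\le t\}$ to turn positions at time $t$ into Baryshnikov's passage times, pick up a sign from the inversion, and then invoke the GUE symmetry $H\overset{(d)}{=}-H$. The paper in fact only writes out the argument for a single coordinate $l$ and leaves the joint statement implicit; your plan to compute the joint CDF with level-dependent $n_j=\lceil t+a_j\sqrt t\rceil+1$ and then argue $\sqrt{n_j}/\sqrt t\to 1$, together with the absolute continuity of the limit, is the honest way to fill that gap.

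One correction, however: you have misread the indexing. In the paper's convention $\lambda_l^i$ is the $i$-th \emph{largest} eigenvalue of the $l\times l$ corner, so Baryshnikov's theorem converges to $(\lambda_1^1,\lambda_2^1,\dots,\lambda_l^1)$, the \emph{top} eigenvalues, whereas the target $(\lambda_1^1,\lambda_2^2,\dots,\lambda_l^l)$ consists of the \emph{bottom} ones. Consequently your claimed identity $(\lambda_j^j)_j\overset{(d)}{=}(-\lambda_j^j)_j$ is false in either reading of the notation (for instance it would force $\lambda_2^1\overset{(d)}{=}\lambda_2^2$, impossible since $\lambda_2^1>\lambda_2^2$ a.s.). The correct consequence of $H\overset{(d)}{=}-H$ is that negation swaps the top and bottom eigenvalues, i.e.\ $(\lambda_j^1)_j\overset{(d)}{=}(-\lambda_j^j)_j$; this is precisely what the paper uses. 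With that fix your chain of equalities reads
\[
\mathbb P\bigl(x(j,n_j)>t\ \forall j\bigr)\ \longrightarrow\ \mathbb P\bigl(\lambda_j^1>-a_j\ \forall j\bigr)=\mathbb P\bigl(-\lambda_j^j>-a_j\ \forall j\bigr)=\mathbb P\bigl(\lambda_j^j<a_j\ \forall j\bigr),
\]
which is exactly the desired limiting CDF.
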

\begin{proof}
    For any $l$ fixed, we have $\sigma_{t}^{0}(l) = \inf \{n : x(l,n) \geq t\}$
    $$
    \begin{aligned}
        \mathbb{P}\left(\frac{\sigma_{t}^{0}(l) - t}{\sqrt{t}} \geq c\right) &= \mathbb{P}(\sigma_{t}^{0}(l) \geq c \sqrt{t} + t)\\
        &= \mathbb{P} \left(\inf{n : x(l,n) \geq t} \geq c \sqrt{t} + t\right)\\
        &= \mathbb{P} \left(x(l,n) \leq n - c \sqrt{n}\right)\\
        &= \mathbb{P} \left(\frac{x(l,n) - n}{\sqrt{n}} \leq -c\right).
    \end{aligned}
    $$
    Since $H$ is a GUE matrix, $\left(-H\right)$ also follows the same distribution since Gaussian random variables are symmetric, so their eigenvalues have the same distribution, we deduce $\lambda_l^1 \overset{(d)}{=} - \lambda_l^l$. Hence $\frac{\sigma_{t}^{0}(l) - t}{\sqrt{t}} \overset{(d)}{=} -  \frac{x(l,n) - n}{\sqrt{n}} \overset{(d)}{=} -\lambda_l^1 \overset{(d)}{=} \lambda_l^l$. 
\end{proof}

\section{Limiting Trajectories}
\label{sec:Tra}
Similar to the result regarding the limiting trajectories in Angel, Holroyd, and Romik's paper \cite{angel2009oriented}, we can generalize the convergence result to the infinite oriented swap. The limiting trajectories are also determined by a random initial speed, as observed in the finite case. 

In addition, to see a non-trivial convergence result, we cannot look at the trajectory of a fixed particle $k$, since it will eventually have speed 1 when all stronger particles are already far to its right. Instead, we study the trajectories of a sequence of particles that grow linearly in time, and we denote this ratio as the $\alpha$. 
\begin{theorem}\label{thm3}
    Let $\alpha \in (0,1)$, we have
    $$\lim_{t \to \infty}\frac{\pi_{s t}(\alpha s t)}{t} = (\alpha + U) \vee (\sqrt{\alpha} -1) 2 \times s,$$ in the sense of convergence in distribution with respect to the uniform topology of functions on $[0,+ \infty)$, where $U$ is a uniform distribution on $[-1 ,1]$.
\end{theorem}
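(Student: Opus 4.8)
The plan is to follow the strategy Angel, Holroyd and Romik used for the finite oriented swap process — reduce the trajectory of particle $k$ to a second-class particle together with a hydrodynamic estimate — and then feed in the infinite-volume inputs from Section \ref{sec:Pre}. The starting point is the exact finite-configuration identity: for each $t$ the projected configurations $T_k\pi_t$ and $T_{k-1}\pi_t$ are compatible and differ exactly at the position of particle $k$, so $\pi_t(k)=\Sigma_{T_k\pi_t,\,T_{k-1}\pi_t}$. By the $T_k$-projection lemma of \cite{angel2009oriented}, transported from a finite segment to the half line by the Harris decomposition (the device already used for Corollary \ref{thm2.0}), $T_k\pi_\cdot$ is, jointly in $k$ and as a process in $t$, the family of coupled TASEPs $\sigma^k_\cdot$ started from the step profiles $\mathbf 1_{1\le x\le k}$ and driven by the restriction of the oriented-swap clocks; hence $\pi_t(k)=\Sigma_{\sigma^k_t,\sigma^{k-1}_t}$ pathwise.

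Next I would bring in the standard second-class particle through the cut-off operator. Run on $\mathbb Z$ the coupled TASEPs $\sigma^0_\cdot$ and $\sigma^{-1}_\cdot$. Since in TASEP a particle is blocked only by the one immediately to its right, the $k$ rightmost particles evolve autonomously, so $R_k\sigma^0_\cdot$ and $R_{k-1}\sigma^{-1}_\cdot$ are themselves TASEPs, namely (in law, by translation covariance of the clock field) the shifts by $-k$ of $\sigma^k_\cdot$ and $\sigma^{k-1}_\cdot$. Applying Theorem \ref{thm2.2} at each time $t$ gives $\Sigma_{R_k\sigma^0_t,R_{k-1}\sigma^{-1}_t}=\Sigma_{\sigma^0_t,\sigma^{-1}_t}\vee d_{\sigma^0_t}(k)=X(t)\vee d_{\sigma^0_t}(k)$, where $X(t):=\Sigma_{\sigma^0_t,\sigma^{-1}_t}$ is the second-class particle. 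Combining with the previous paragraph,
$$\pi_t(k)\ \overset{(d)}{=}\ k+\Big(X(t)\ \vee\ d_{\sigma^0_t}(k)\Big)$$
jointly in $t$, with $X(\cdot)$ and $d_{\sigma^0_\cdot}(\cdot)$ read off the same realization of $\sigma^0$.

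Now set $k=\lfloor\alpha st\rfloor$ and divide by $t$. The term $k/t\to\alpha s$ deterministically; Theorem \ref{thm2.1} gives $X(st)/t\Rightarrow U\times s$ in the uniform topology; and, using once more that $R_k$ commutes with the dynamics (so $d_{\sigma^0_t}(k)\overset{(d)}{=}d_{\sigma^k_t}(k)-k$) together with Corollary \ref{thm2.4} applied with $x=s$, $y=\alpha s$ — legitimate since $0<\alpha<1$ — one gets $d_{\sigma^0_{st}}(\lfloor\alpha st\rfloor)/t\to(\alpha s+s-2\sqrt{\alpha s\cdot s})-\alpha s=(1-2\sqrt\alpha)s$. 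Because this last limit is deterministic the two random terms converge jointly (Slutsky), and the continuous map $(g,h)\mapsto\big(s\mapsto \alpha s+g(s)\vee h(s)\big)$ then yields
$$\frac{\pi_{st}(\lfloor\alpha st\rfloor)}{t}\ \Longrightarrow\ \alpha s+(Us)\vee\big((1-2\sqrt\alpha)s\big)\ =\ \big((\alpha+U)\vee(\sqrt\alpha-1)^2\big)\,s$$
in the uniform topology, using $\alpha+1-2\sqrt\alpha=(\sqrt\alpha-1)^2$.

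The step I expect to be the real obstacle is the upgrade of Corollary \ref{thm2.4} from a fixed value of $s$ to the uniform topology on $[0,\infty)$: that corollary is a pointwise-in-$s$ hydrodynamic statement, whereas the conclusion is functional. I would obtain the functional version for $s\mapsto d_{\sigma^0_{st}}(\lfloor\alpha st\rfloor)/t$ by proving convergence in probability on a countable dense set of $s$, controlling the oscillation of this (nearly monotone) map between grid points via monotonicity of TASEP positions in time plus a crude bound on the number of extra particles introduced when $s$ increases, and invoking continuity of the limiting line; Theorem \ref{thm2.1} is already in functional form, so nothing more is needed there. The remaining bookkeeping — that the $T_k$-projection, the commutation of $R_k$ with the dynamics, translation covariance, and Theorem \ref{thm2.2} all survive as process-level statements with $k=k(t)$ growing linearly — is routine, since on any bounded time window the Harris construction of Section \ref{sec:Pre} reduces each assertion to the already-established finite-$n$ facts.
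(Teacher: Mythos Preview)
Your argument is correct and is essentially the paper's own proof: project to coupled TASEPs, apply the cut-off identity of Theorem~\ref{thm2.2} to express $\pi_t(k)$ as the maximum of the translated second-class particle and the $k$-th rightmost TASEP particle, then invoke Theorem~\ref{thm2.1} and Corollary~\ref{thm2.4}. Your treatment is in fact more careful than the paper's, which writes the same chain of equalities but does not isolate the pointwise-to-uniform upgrade of Corollary~\ref{thm2.4} that you flag and sketch.
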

\begin{remark}
     By selecting specific values of the parameters $\alpha
     $ and $s$ in this theorem, we recover the known results. When $\alpha = 0$, $s=1$, the right-hand side simplifies to $1$, which corresponds to the case where we examine $\pi_t(1)$ at time $t$. The law of $\pi_t(1)$ follows a Poisson random variable with parameter $t$, since it is the strongest particle in the system. By the law of large number $\lim_{t \to \infty}\frac{Poi(t)}{t} = 1$. 
     
     Similarly, when $\alpha = 1$, $s=1$, the theorem reduces to the statement that $\pi_t(t)$ converges almost surely to a uniform random variable in $[0,2]$, which corresponds to the strong law of large number of the second class particle in TASEP \cite{mountford2005motion}. 
\end{remark}
\begin{proof}
    The proof relies mainly on putting together known results from the previous section. First, we project the infinite oriented swap process to the TASEP with two types of particles on half line, in which the particles with labels less than $\alpha st$ are of the first class, and the particles with labels greater than it are holes, and the particle with label $\alpha st$ itself is the second class particle. In our notation, we have$\sigma_{st}^{\alpha st,\mathbb{Z}^+} = \Sigma_{\sigma_{st}^{\alpha st}, \sigma_{st}^{\alpha st - 1}}$(recall Definition \ref{def1}). Next, to use the results of single species-TASEP, we further transform the problem using the cut-off operator and Theorem \ref{thm2.2}. Letting $t \to \infty$, we obtain
    $$
    \begin{aligned}
        \lim_{t \to \infty}\frac{\pi_{s t}(\alpha s t)}{t} &= \lim_{t \to \infty} \frac{\Sigma_{\sigma_{st}^{\alpha st,\mathbb{Z}^+}, \sigma_{st}^{\alpha st - 1,\mathbb{Z}^+}}}{t}\\
        &= \lim_{t \to \infty} \frac{\Sigma_{\sigma_{st}^{\alpha st}, \sigma_{st}^{\alpha st - 1} \vee d_{\sigma_{st}^{\alpha st}}(\alpha st)}}{t}\\
        &= \lim_{t \to \infty} \frac{\Sigma_{\sigma_{st}^{\alpha st}, \sigma_{st}^{\alpha st - 1}}}{t} \vee \frac{d_{\sigma_{st}^{\alpha st}}(\alpha st)}{t}\\
        &= \lim_{t \to \infty} \left( \frac{\Sigma_{\sigma_{st}^{0}, \sigma_{st}^{- 1}}}{t} + \frac{\alpha st}{t}\right) \vee \frac{d_{\sigma_{st}^{\alpha st}}(\alpha st)}{t}\\
        &= [(\alpha + U) \vee (\sqrt{\alpha}-1)^2] s.
    \end{aligned}
    $$
    The second to last line to the last line is due to Corollary \ref{thm2.4} and Theorem \ref{thm2.1}.
\end{proof}

\section{Fluctuations of $\pi_t$ and $\pi_t^{-1}$}
\label{sec:Flu}
According to the results of the previous section on limiting trajectories, $\frac{\pi_t(1)}{t}$ converges to $1$ in probability, as $t \to \infty$. In this section, we turn our attention to its fluctuations. Thanks to the symmetry property, it is sufficient to analyze the permutation without inversion. 
\begin{theorem}
     For a fixed $l$, the joint distribution  of locations of the first $l$ particles from the left in the oriented swap process on half line after scaling converges to a variant of GUE corners process,
    $$\lim_{t \to \infty}\left(\frac{\pi_t(1) - t}{\sqrt{t}}, \frac{\pi_t(2) - t}{\sqrt{t}}, \dots \frac{\pi_t(l) - t}{\sqrt{t}}\right) \overset{(d)}{=} \left(\lambda_1^1, \lambda_2^2, \dots \lambda_l^l\right).$$
\end{theorem}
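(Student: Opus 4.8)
The plan is to reduce the statement about the oriented swap process on the half line to the corresponding statement for TASEP with step initial condition, which is exactly Corollary \ref{thm2.5}. The key observation is that the position of particle $k$ in the oriented swap process can be read off from a family of coupled TASEPs: for each $k$, the configuration $T_k \pi_t$ is a TASEP on $\mathbb{Z}^+$ with initial condition $\mathbf{1}_{x \le k}$, and $T_{k-1}\pi_t$ is the same with $k-1$ particles; these two are compatible, and $\Sigma_{T_k \pi_t, T_{k-1}\pi_t} = \pi_t(k)$. Since we are looking at a \emph{fixed} $l$ and the first $l$ particles, for large $t$ each of these particles has moved to a position near $t$, well inside the bulk, and the relevant TASEPs look locally (near position $t$) like a half-line TASEP started from a large but finite block of particles. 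I would first argue, via the Harris construction and a finite-speed-of-propagation estimate (no swap has crossed some interval near the origin by time $t$ with high probability), that the positions of the first $l$ particles in the half-line process agree with high probability with the positions of the first $l$ particles in a \emph{two-sided} TASEP with step initial condition $\sigma_t^0$ — equivalently, that the boundary at $0$ is irrelevant for the rightmost $l$ particles on the timescale $t$, since those particles are at distance $\Theta(t)$ from the wall and fluctuate only on scale $\sqrt{t}$.

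Concretely, the steps in order: (1) Apply the symmetry of Corollary \ref{thm2.0}(2): the vector $(\pi_t(1),\dots,\pi_t(l))$ has the same law as $(\pi_t^{-1}(1),\dots,\pi_t^{-1}(l))$, so in fact either side of the claimed identity can be worked with; I will work with $\pi_t(k)$, the positions of the labelled particles $1,\dots,l$. (2) Use the projection theorem ($T_k \pi_t$ is a TASEP) together with Theorem \ref{thm2.2} to express $\pi_t(k) = \Sigma_{R_\infty\text{-type cutoffs}}$ — more simply, $\pi_t(k)$ is the position of the $k$-th particle from the left in the TASEP $\sigma_t^{k,\mathbb{Z}^+}$, which is the $k$-th rightmost... wait, with $k$ particles started at $\{1,\dots,k\}$ they are at positions $\ge 1$, and particle $k$ (label-$k$, i.e. originally rightmost) is the rightmost of the $k$. (3) Observe that for a TASEP on $\mathbb{Z}^+$ started from $\mathbf{1}_{1\le x\le k}$ with $k$ large, the rightmost $l \ll k$ particles behave, up to time $t$ with $t$ comparable to (not much larger than) $k$, exactly as the rightmost $l$ particles of the step-initial-condition TASEP $\sigma_t^0$ on $\mathbb{Z}$: the left wall and the finiteness of the block only affect particles at distance $\Theta(t)$ behind, by the hydrodynamic profile and finite speed of propagation. (4) Invoke Corollary \ref{thm2.5} to conclude that $\big(\tfrac{\sigma_t^0(1)-t}{\sqrt t},\dots,\tfrac{\sigma_t^0(l)-t}{\sqrt t}\big) \overset{(d)}{\to} (\lambda_1^1,\dots,\lambda_l^l)$, and transfer this through the coupling of step (3).

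The main obstacle is step (3): making rigorous the claim that the half-line/finite-block TASEP and the two-sided step-TASEP can be coupled so that their rightmost $l$ particles coincide with probability tending to $1$. The natural tool is the basic coupling (drive both by the same Poisson clocks) together with attractiveness/monotonicity of TASEP, plus a comparison of the leftmost relevant particle's position: one shows that with high probability no discrepancy originating from the wall or from the left end of the finite block can propagate rightward fast enough to reach the window $[t - C\sqrt t, t + C\sqrt t]$ by time $t$, using that a second-class particle (the discrepancy) travels ballistically with speed bounded by the local characteristic speed, which near the front is close to $1$ but the discrepancies are created at positions $o(t)$ behind the front... this requires a quantitative hydrodynamic estimate or a large-deviation bound for TASEP currents. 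Alternatively one can cite the analogous argument already used in the finite oriented swap process in \cite{angel2009oriented} and in \cite{amir2011tasep} for the half-line, and adapt it; I expect the cleanest writeup to piggyback on Corollary \ref{thm2.4} (the law-of-large-numbers location of the $yt$-th rightmost particle), taking $y \to 0$ to see that the $l$-th rightmost particle sits at $t(1+o(1))$ and is insensitive to initial data at positions $\le \epsilon t$ for any fixed $\epsilon$. The remaining parts of the argument — the symmetry reduction and the appeal to Corollary \ref{thm2.5} — are routine.
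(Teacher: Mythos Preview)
Your overall strategy---reduce to Corollary~\ref{thm2.5} by coupling the first $l$ particles of the half-line swap process to the first $l$ particles of the step-initial-condition TASEP $\sigma_t^0$---matches the paper's, but the mechanism you propose for that coupling is considerably heavier than what the paper actually uses. You aim to show agreement with high probability via a discrepancy-propagation / finite-speed-of-information argument backed by hydrodynamic or large-deviation estimates. The paper instead proves the stronger statement $\pi_t(k)-\sigma_t^0(k)\to 0$ \emph{almost surely} for each fixed $k$, by an elementary random-walk recurrence argument, and then invokes Slutsky's lemma together with Corollary~\ref{thm2.5}.

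Concretely, the paper drives $\pi_t$ and $\sigma_t^0$ (particles initially at $0,-1,-2,\dots$) by the same Poisson clocks. For $k=1$, neither $\pi_t(1)$ nor $\sigma_t^0(1)$ is ever blocked (particle $1$ is the strongest label; $\sigma_t^0(1)$ is the rightmost TASEP particle), so as long as they occupy different sites they jump off independent clocks and the difference $\pi_t(1)-\sigma_t^0(1)$ is a continuous-time simple random walk started at $1$; recurrence forces it to hit $0$ in finite time, after which the two share a clock forever. One then inducts: once particles $1,\dots,k-1$ are coupled, a short case analysis shows $\pi_t(k)-\sigma_t^0(k)$ is dominated by a recurrent random walk and hence also hits $0$ a.s.

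Your route can be made to work, but it imports quantitative input (characteristic speeds, current large deviations) that the paper sidesteps entirely; the paper's argument is two paragraphs of elementary probability. Note also that your step~(2) is tangled: in the projection $T_k\pi_t$ the TASEP particle initially at position $k$ stays rightmost, but in the swap process label $k$ is the \emph{weakest} of $1,\dots,k$ and ends up leftmost, so the TASEP ordering and the swap labeling are reversed, not identical. The paper's direct particle-by-particle coupling avoids this bookkeeping altogether.
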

\begin{proof}
    By Corollary \ref{thm2.5}, we have 
    $$
    \lim_{t \to \infty}\left(\frac{\sigma_{t}^{0}(1) - t}{\sqrt{t}}, \frac{\sigma_{t}^{0}(2) - t}{\sqrt{t}}, \dots \frac{\sigma_{t}^{0}(l) - t}{\sqrt{t}}\right) \overset{(d)}{=} \left(\lambda_1^1, \lambda_2^2, \dots \lambda_l^l\right).
    $$
    By Slutsky's lemma\footnote{The lemma says, the sum of two sequences of random variables, one of them converges in distribution, the other converges in probability to a constant, then the sum of the two sequences converges to the sum of the limiting distribution.}, it is sufficient to show that for any $l$, $\lim\limits_{t \to \infty} \pi_{t}(l) - \sigma_{t}^{0}(l) = 0$ almost surely. Consider a TASEP $\sigma_{t}^{0}$(with particles initially at positions $0, -1, -2 ,\dots$) and a swap oriented process $\pi_{t}$(i.e. with particles initially at positions $1, 2 ,\dots$). We assume that both processes are driven by the same family of Poisson processes. Specifically, whenever there is an attempted swap in $\pi_{t}$ between $i$ and $i+1$, the particle in the TASEP at position $i$ will also attempt to jump. 
    
    For $l = 1$, $\sigma_{0}^{0}(1) = 0$ and $\pi_{0}(1) = 1$, before the first time they locate at the same position, the distance between those two particles is a continuous time random walk $N_t$ starting from $1$, with rate $1$ the value of $N_t$ increases or decreases. By the recurrence property of random walks, $N_t$ will hit $0$ in finite time, which implies that two particles with label $1$ will be at the same position after finite time, with probability $1$. After that, by coupling construction, the two particles will jump simultaneously, as they are driven by the same randomness. Consequently, $\sigma_{t}^{0}(1) = \pi_{t}(1)$ for all $t$ after their first coincidence. 
    
    For $l \geq 2$, we only consider $l = 2$, since $l \geq 3$ follows by applying the same argument as the $l = 2$ case. We again couple $\pi_{t}$ with $\sigma_{t}^{0}$. Based on the description in the previous paragraph, we may assume that at time $s$, $\sigma_{s}^{0}(1) = \pi_{s}(1)$. From this, there are three possible cases to consider. 
    
    If we already have $\sigma_{s}^{0}(2) = \pi_{s}(2)$  at time $s$, then no further proof is required.

    If at time $s$, $\sigma_{s}^{0}(2) < \pi_{s}(2) < \sigma_{s}^{0}(1) = \pi_{s}(1)$, then the difference of $\pi_{s}(2)$ and $\sigma_{s}^{0}(2)$ is no longer a random walk in continuous time, as in the swap process, the particle $2$ cannot jump over $1$. However, the process $\pi_{t}(2) - \sigma_{t}^{0}(2)$ can be bounded from above by a continuous-time random walk starting from $\pi_{s}(2) - \sigma_{s}^{0}(2)$. Again, by the random walk recurrence property, in finite time we have $\sigma_{s}^{0}(2) = \pi_{s}(2)$. 

    The last case is when $\sigma_{s}^{0}(2) < \sigma_{s}^{0}(1) = \pi_{s}(1) < \pi_{s}(2)$. Again, by the recurrence property of random walk, case 3 will fall into case 2 in a finite time. 
\end{proof}

By Corollary \ref{thm2.0}, we prove that the fluctuation of the inversion of the process at the same time, i.e. the label of the particle at position $1$ at time $t$ follows the same asymptotic behavior. This proposition reveals the Gaussian fluctuation of the $\pi_t^{-1}(1)$, to the best of our knowledge there are no probabilistic-based proofs without using the symmetry. 
\begin{proposition}\label{thm4}
    At a fixed time t, the same convergence result holds for the joint distribution $\pi_t^{-1}$:
    $$\lim_{t \to \infty}\left(\frac{\pi_t^{-1}(1) - t}{\sqrt{t}}, \frac{\pi_t^{-1}(2) - t}{\sqrt{t}}, \dots \frac{\pi_t^{-1}(l) - t}{\sqrt{t}}\right) \overset{(d)}{=} \left(\lambda_1^1, \lambda_2^2, \dots \lambda_l^l\right).$$
\end{proposition}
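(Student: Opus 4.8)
The plan is to deduce Proposition \ref{thm4} directly from the theorem just proved about $\pi_t(l)$ together with the symmetry statement in Corollary \ref{thm2.0}. The only content here is that the joint law of the first $l$ positions $(\pi_t(1),\dots,\pi_t(l))$ and the joint law of the first $l$ inverse values $(\pi_t^{-1}(1),\dots,\pi_t^{-1}(l))$ coincide at a fixed time $t$, so any limiting statement for one transfers verbatim to the other.

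First I would invoke the second part of Corollary \ref{thm2.0}, which gives, for each fixed $t$ and each $k$,
$$
(\pi_t(1),\pi_t(2),\dots,\pi_t(k)) \overset{(d)}{=} (\pi_t^{-1}(1),\pi_t^{-1}(2),\dots,\pi_t^{-1}(k)).
$$
Taking $k=l$ and applying the affine, deterministic map $(x_1,\dots,x_l)\mapsto\bigl(\tfrac{x_1-t}{\sqrt t},\dots,\tfrac{x_l-t}{\sqrt t}\bigr)$ to both sides preserves equality in distribution, so for every $t>0$
$$
\left(\frac{\pi_t(1)-t}{\sqrt t},\dots,\frac{\pi_t(l)-t}{\sqrt t}\right) \overset{(d)}{=} \left(\frac{\pi_t^{-1}(1)-t}{\sqrt t},\dots,\frac{\pi_t^{-1}(l)-t}{\sqrt t}\right).
$$
Then I would pass to the limit $t\to\infty$: the left-hand side converges in distribution to $(\lambda_1^1,\lambda_2^2,\dots,\lambda_l^l)$ by the preceding theorem, and since the two sides have identical laws for every finite $t$, the right-hand side converges to the same limit. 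This is just the elementary fact that if $X_t\overset{(d)}{=}Y_t$ for all $t$ and $X_t\Rightarrow Z$, then $Y_t\Rightarrow Z$.

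There is essentially no obstacle, but the one point deserving care is making sure Corollary \ref{thm2.0}(2) is being applied at a genuinely fixed time — the symmetry holds time-by-time, not as an identity of processes — which is exactly the setting of the statement, so the argument is legitimate. A secondary remark worth including is why the finite projection used in Corollary \ref{thm2.0}(2) is enough: the limiting object only involves the first $l$ coordinates, so one never needs a symmetry statement about the full infinite permutation, just its restriction to $\{1,\dots,l\}$, which is precisely what the corollary supplies via the Harris decomposition into finite segments.
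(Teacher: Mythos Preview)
Your proposal is correct and matches the paper's own argument exactly: the paper simply invokes Corollary \ref{thm2.0} to transfer the convergence result for $(\pi_t(1),\dots,\pi_t(l))$ to $(\pi_t^{-1}(1),\dots,\pi_t^{-1}(l))$, and you have spelled out precisely this step. Your additional remarks about the map being deterministic and the symmetry holding only at fixed times are accurate and make the short argument more transparent than the paper's one-line justification.
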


\section{$\pi_t^{-1}$ as a process}
\label{sec:Main}
Based on the proposition in the previous section, we already establish the marginal distribution of the process $\pi_t^{-1}$. In this section, we focus on its much more challenging properties as a stochastic process. To characterize the nature of this process, we need some definitions. 

\begin{definition}
     Let $H_t = \pi_{t+}^{-1}(1) - \pi_{t-}^{-1}(1)$, where $\pi_{t+}^{-1}(1)$ and $\pi_{t-}^{-1}(1)$ are the right and left limits. 
\end{definition}

\begin{remark}
    The existence of the left and right limits of $\pi_{t+}^{-1}(1)$ is guaranteed by its monotonicity. $H_t$ is almost everywhere $0$, it is only positive when there is a swap at position 1 time $t$, and $H_t$ records the difference of the labels of the swapped particles. 
\end{remark}

We also need two technical lemmas. 
\begin{lemma}\label{thm5.1}
    Let $N_t^{1}$ and $N_t^{2}$ be two independent Poisson processes on $\mathbb{R}^{+}$ with rate $1$, and set $N_{t,k} := k + N_t^{1} - N_t^{2}$, $T_{k} := \inf \{ t :  N_{t,k} = 0\}$. Then, we have
    $$
    \mathbb{P}(T_k > t) = \sum_{i = 1 - k}^{k} \mathbb{P}(N_t^{1} - N_t^{2} = i).
    $$
\end{lemma}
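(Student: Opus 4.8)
The plan is to recognize this as a reflection-principle identity for the difference of two independent Poisson processes, viewed as a continuous-time simple random walk, and to prove it by a symmetry argument on the jump times. Write $S_t := N_t^1 - N_t^2 = N_{t,k} - k$; this is a continuous-time random walk started at $0$ that jumps $+1$ (when $N^1$ fires) or $-1$ (when $N^2$ fires), each at rate $1$, with no simultaneous jumps almost surely. The hitting time is $T_k = \inf\{t : S_t = -k\}$, and we want $\mathbb{P}(T_k > t) = \mathbb{P}(S_t \in \{1-k, 2-k, \dots, k\})$, i.e. that the walk has not yet reached $-k$ by time $t$ exactly when $S_t$ lies strictly above $-k-1$ and at most $k$ — wait, the clean statement is that the right-hand side is $\sum_{i=1-k}^{k}\mathbb{P}(S_t = i)$, so I will need to argue the two "tails" $\{S_t \le -k\}$ and $\{S_t \ge k+1\}$ exactly account for $\{T_k \le t\}$ after a reflection.

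First I would condition on the total number of jumps $N := N_t^1 + N_t^2 \sim \mathrm{Poi}(2t)$ up to time $t$ and on their ordered times; given $N = n$, the sequence of signs $(\varepsilon_1, \dots, \varepsilon_n) \in \{\pm 1\}^n$ is i.i.d. uniform (each jump is a $N^1$-jump or a $N^2$-jump with probability $1/2$, independently), and $S_t = \varepsilon_1 + \cdots + \varepsilon_n$. So the identity reduces to the discrete statement: for a simple symmetric random walk $(S_m)_{m\ge0}$ with $S_0 = 0$, and for each fixed $n$,
$$
\mathbb{P}\big(\,\min_{0 \le m \le n} S_m > -k\,\big) \;=\; \sum_{i=1-k}^{k} \mathbb{P}(S_n = i).
$$
I would prove this via the reflection principle: for any $j \ge k$, the paths with $S_n = -j$ are in bijection (reflect after the first hitting time of $-k$) with paths having $S_n = -2k + j$, hence $\mathbb{P}(S_n = -j) = \mathbb{P}(S_n = j - 2k)$ for all $j \ge k$; equivalently $\mathbb{P}(S_n \le -k) = \mathbb{P}(S_n \ge k+1)$ — here one must be mildly careful about the boundary term $S_n = -k$ itself, which corresponds under reflection to $S_n = -k$ again, but it is unambiguously in the event $\{T_k \le n\}$. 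Summing, $\mathbb{P}(T_k \le n) = \mathbb{P}(\min_{m\le n} S_m \le -k) = \mathbb{P}(S_n \le -k) + \mathbb{P}(S_n \ge k+1 \text{ and the min reached } -k)$; the standard reflection computation gives $\mathbb{P}(\min_{m\le n}S_m \le -k) = \mathbb{P}(S_n \le -k) + \mathbb{P}(S_n < -k) $ reflected up, which collapses to $1 - \sum_{i=1-k}^{k}\mathbb{P}(S_n = i)$. Then averaging over $n \sim \mathrm{Poi}(2t)$ and using $\mathbb{P}(S_n = i) \to \mathbb{P}(N_t^1 - N_t^2 = i)$ under the mixture gives the claim.

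Alternatively, and perhaps more cleanly, I would do the reflection directly in continuous time using the strong Markov property at $T_k$: on $\{T_k \le t\}$, the post-$T_k$ increment $S_t - S_{T_k} = S_t + k$ is, conditionally, symmetric (it is again a difference of two independent rate-$1$ Poisson processes run for time $t - T_k$), so $\mathbb{P}(S_t = -k - j \mid T_k \le t) = \mathbb{P}(S_t = -k + j \mid T_k \le t)$ for every $j \ge 0$. Since $\{S_t \le -k-1\} \subseteq \{T_k \le t\}$ and $\{S_t = -k\} \subseteq \{T_k \le t\}$, pairing $j \leftrightarrow -j$ shows $\mathbb{P}(T_k \le t) = \mathbb{P}(S_t \le -k) + \mathbb{P}(S_t < -k) + \tfrac12(\text{nothing left over})$... more carefully: $\mathbb{P}(T_k \le t, S_t \ge -k+1) = \mathbb{P}(T_k \le t, S_t \le -k-1) = \mathbb{P}(S_t \le -k-1)$, hence $\mathbb{P}(T_k \le t) = \mathbb{P}(S_t \le -k) + \mathbb{P}(S_t \le -k-1)$, and therefore $\mathbb{P}(T_k > t) = 1 - \mathbb{P}(S_t \le -k) - \mathbb{P}(S_t \le -k - 1) = \sum_{i \ge -k+1}\mathbb{P}(S_t = i) - \mathbb{P}(S_t \le -k-1) = \sum_{i=-k+1}^{k}\mathbb{P}(S_t = i)$, where the last equality uses the symmetry $\mathbb{P}(S_t = i) = \mathbb{P}(S_t = -i)$ to cancel $\sum_{i \ge k+1}\mathbb{P}(S_t=i)$ against $\mathbb{P}(S_t \le -k-1)$. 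This is the whole proof.

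The main obstacle is purely bookkeeping: getting the boundary index exactly right so that the sum runs from $1-k$ to $k$ rather than, say, $-k$ to $k$ or $1-k$ to $k-1$. The subtlety is that the event $\{S_t = -k\}$ lies in $\{T_k \le t\}$ (the walk has touched $-k$) and is its own reflection image, so it must be handled asymmetrically — it gets counted on the "already hit" side but has no partner to cancel on the "not hit" side. I would therefore state the reflection as a bijection on $\{S_t \le -k-1\} \leftrightarrow \{T_k \le t, \ S_t \ge -k+1\}$ (strict inequalities on both ends), which is clean, and treat $\{S_t = -k\}$ separately. One should also note in passing that $T_k < \infty$ a.s. (recurrence of the symmetric walk), though this is not needed for the finite-$t$ identity; and that no regularization of "simultaneous jumps" is required since two independent Poisson processes jump at distinct times almost surely. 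I would present the continuous-time version as the main argument since it avoids conditioning on $N$ and the attendant Poissonization step.
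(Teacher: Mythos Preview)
Your proof is correct and uses essentially the same reflection-principle argument as the paper: the paper states the identity as $\mathbb{P}(T_k>t,\,N_{t,k}=x)=\mathbb{P}(N_{t,k}=x)-\mathbb{P}(N_{t,k}=-x)$ for $x\ge 1$ and then telescopes using the symmetry of $N_t^1-N_t^2$, which is exactly your continuous-time strong-Markov reflection written in the shifted variable $N_{t,k}=S_t+k$. Your additional discrete-time route via conditioning on $N_t^1+N_t^2$ is unnecessary but harmless; the continuous-time argument you give at the end is already the full proof and matches the paper's.
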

\begin{proof}
    By using the reflection principle, for any $x \geq 1$, we have
    $$
    \mathbb{P}(T_k > t, N_{t,k} = x) = \mathbb{P}(N_{t,k} = x) - \mathbb{P}(N_{t,k} = -x), 
    $$
    $$
    \begin{aligned}
        \mathbb{P}\left(T_k > t\right) & = \sum_{i=1} \mathbb{P}\left(T_k > t, N_{t,k} = i\right) = \sum_{i=1} \mathbb{P}\left(T_k > t, N_{t,k} = i\right) - \mathbb{P}\left(T_k > t, N_{t,k} = -i\right)\\
        & = \sum_{i = 1}^{2k}\mathbb{P}\left(N_{t,k} = i\right) = \sum_{i = 1}^{2k}\mathbb{P}\left(N_t^{1} - N_t^{2} = i - k\right) =\sum_{i = 1 - k}^{k} \mathbb{P}\left(N_t^{1} - N_t^{2} = i\right)
    \end{aligned}
    $$
\end{proof}

\begin{lemma}\label{lem5.3}
    The probability that a continuous-time random walk starting at $1$ does not hit $0$ by time $t$ is of order $\frac{1}{\sqrt{t}}$. In more detail, we have 
    $$\lim_{t \to \infty} \sqrt{t}\mathbb{P}\left(T_{1} > t\right) = \frac{1}{\sqrt{\pi}}.$$
\end{lemma}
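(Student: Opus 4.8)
The plan is to reduce the statement to a precise local limit estimate for the symmetric continuous-time random walk $N_{t,1} = 1 + N_t^1 - N_t^2$ and then combine Lemma \ref{thm5.1} with a classical Gaussian approximation of the difference of two independent Poisson variables. Concretely, by Lemma \ref{thm5.1} applied with $k=1$,
$$
\mathbb{P}(T_1 > t) = \sum_{i=0}^{1} \mathbb{P}(N_t^1 - N_t^2 = i) = \mathbb{P}(N_t^1 - N_t^2 = 0) + \mathbb{P}(N_t^1 - N_t^2 = 1).
$$
So it suffices to understand the point probabilities of $D_t := N_t^1 - N_t^2$ near the origin. The variable $D_t$ is the difference of two independent Poisson$(t)$ variables (a Skellam random variable), with mean $0$ and variance $2t$; by the local central limit theorem for lattice distributions (or directly via the Bessel-function formula $\mathbb{P}(D_t = i) = e^{-2t} I_{|i|}(2t)$ and the large-argument asymptotics of the modified Bessel function), one has, uniformly for $i$ in bounded sets,
$$
\sqrt{t}\,\mathbb{P}(D_t = i) \;\longrightarrow\; \frac{1}{\sqrt{4\pi}} = \frac{1}{2\sqrt{\pi}}
$$
as $t \to \infty$. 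This is the one computational input; I would justify it either by quoting the standard asymptotic $I_\nu(z) \sim e^z/\sqrt{2\pi z}$ as $z \to \infty$ (for fixed $\nu$), or by a one-line local CLT argument after noting $D_t$ is supported on all of $\mathbb{Z}$ with span $1$.

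Granting this, I would finish by writing
$$
\sqrt{t}\,\mathbb{P}(T_1 > t) = \sqrt{t}\,\mathbb{P}(D_t = 0) + \sqrt{t}\,\mathbb{P}(D_t = 1) \;\longrightarrow\; \frac{1}{2\sqrt{\pi}} + \frac{1}{2\sqrt{\pi}} = \frac{1}{\sqrt{\pi}},
$$
which is exactly the claim. (As a sanity check, the leading order $\mathbb{P}(T_1 > t) \asymp t^{-1/2}$ matches the well-known survival-probability exponent $1/2$ for one-dimensional random walk, and the continuous-time rate here is $2$, consistent with the variance $2t$.) The statement that the probability is "of order $1/\sqrt{t}$" then follows immediately from the existence of the finite nonzero limit.

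The main obstacle is not conceptual but a matter of citing or proving the right uniform local limit estimate: one must ensure the Gaussian/Bessel approximation $\sqrt{t}\,\mathbb{P}(D_t = i) \to (4\pi)^{-1/2}$ holds simultaneously at $i = 0$ and $i = 1$ with controlled error, so that summing two terms is legitimate. If one prefers to avoid Bessel-function asymptotics, the cleanest route is to write $D_t \overset{(d)}{=} \sum_{j=1}^{M}\varepsilon_j$ conditionally on $M = N_t^1 + N_t^2 \sim \mathrm{Poi}(2t)$ with $\varepsilon_j$ i.i.d.\ $\pm 1$, apply the local CLT for simple random walk together with concentration of $M$ around $2t$, and conclude; either way the argument is short once the limit $\sqrt{t}\,\mathbb{P}(D_t=i)\to \tfrac{1}{2\sqrt{\pi}}$ is in hand.
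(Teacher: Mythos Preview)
Your proposal is correct and follows essentially the same route as the paper: both apply Lemma~\ref{thm5.1} with $k=1$ to write $\mathbb{P}(T_1>t)=\mathbb{P}(D_t=0)+\mathbb{P}(D_t=1)$ for the Skellam variable $D_t=N_t^1-N_t^2$, and then invoke the standard asymptotic $p(k;t,t)\sim (4\pi t)^{-1/2}e^{-k^2/4t}$ (equivalently $I_\nu(z)\sim e^z/\sqrt{2\pi z}$, which is exactly the reference \cite{abramowitz1972handbook}, 9.7.1, that the paper cites). The only difference is presentational: the paper simply quotes the Skellam asymptotic, whereas you spell out the reduction and offer the alternative local-CLT justification via conditioning on $N_t^1+N_t^2$.
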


The asymptotic properties of a broader class of random variables called the Skellam distribution are well known. The Skellam distribution represents the difference between two independent Poisson random variables with parameter $\mu_1$,$\mu_2$,  by $p(k;\mu_1,\mu_2)$ we denote the probability that the Skellam distribution takes value $k$.  We have $$p\left(k;\mu,\mu\right) \sim \frac{e^{-k^2/4\mu}}{\sqrt{4\pi \mu}}.$$
The result of this lemma can be found in \cite{abramowitz1972handbook}(9.7.1, page 377). Lemma \ref{lem5.3} is a corollary of this result by taking $k = 0, \mu = 1$.

Before we dive into the theorems, we first describe the process $\pi_t^{-1}$. The stochastic process can be visualized as a step function, where the steps become progressively longer and the jumps between steps increasingly larger as time progresses. This occurs because as time increases $\pi_{t}^{-1}\left(1\right)$ will be a large number, there is little chance that the particle at position $1$ and the particle at position $2$ can swap, which explains why $\pi_{t}^{-1}\left(1\right)$ will remain constant over long time periods. The following theorem provides a mathematical expression for this intuitive description. 
\begin{theorem}\label{thm5.19}
        Let $J_t$ be the number of successful jumps at position $1$ during time $[0,t]$, we have
       $$\lim_{t \to \infty}\mathbb{E}\left[\frac{J_t}{\sqrt{t}}\right] = \frac{2}{\sqrt{\pi}}. $$
       Define $A_{t} := \{ \pi_t^{-1} (1) < \pi_{t+}^{-1} (1)\}$, we have
       $$\lim_{t \to \infty}\mathbb{E}\left[\frac{H_t}{\sqrt{t}} | A_t\right] = \sqrt{\pi}.$$
\end{theorem}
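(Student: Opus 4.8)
The plan is to handle the two limits separately, both by reducing to the tail asymptotics of the hitting time $T_1$ of a continuous-time simple random walk (Lemma~\ref{lem5.3}), together with the symmetry $\pi_t^{-1}(1) \overset{(d)}{=} \pi_t(1)$ and the identification of $\pi_t(1)$ as (essentially) a Poisson$(t)$ variable. For the first limit, I would observe that $J_t$, the number of successful swaps at position $1$ up to time $t$, equals $\pi_t^{-1}(1) - \pi_0^{-1}(1) = \pi_t^{-1}(1) - 1$ only if every swap increases the label at position $1$ by exactly one — which is \emph{not} the case, since jumps in $\pi_t^{-1}(1)$ can be large. Instead $J_t$ counts the number of jump epochs, so the clean route is: by symmetry and the graphical construction, the successive swap times at position $1$ are governed by independent copies of $T_1$ (each time a swap occurs, the new particle at position $1$ is the strongest among those present, and the gap to the particle at position $2$ performs a fresh random walk that must return to $0$ for the next swap). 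Hence $J_t$ is a renewal counting process with inter-arrival law $T_1$, and $\mathbb{E}[J_t] = \sum_{n\ge 1}\mathbb{P}(T_1^{(1)}+\dots+T_1^{(n)} \le t)$. Since $\mathbb{P}(T_1 > t) \sim \frac{1}{\sqrt{\pi t}}$, the inter-arrival times are in the domain of attraction of a one-sided stable law of index $1/2$, and the standard renewal estimate for heavy-tailed inter-arrivals gives $\mathbb{E}[J_t] \sim \frac{2}{\sqrt{\pi}}\sqrt{t}$; I would make this precise via a Tauberian theorem applied to the Laplace transform, using $\mathbb{E}[e^{-\lambda T_1}] = 1 - \sqrt{\lambda} + o(\sqrt{\lambda})$ as $\lambda \to 0^+$, so that $\sum_n \mathbb{E}[e^{-\lambda(T_1^{(1)}+\dots+T_1^{(n)})}] = \frac{\mathbb{E}[e^{-\lambda T_1}]}{1-\mathbb{E}[e^{-\lambda T_1}]} \sim \lambda^{-1/2}$, and invert.

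For the second limit I would first make sense of the conditioning: $A_t = \{\pi_{t}^{-1}(1) < \pi_{t+}^{-1}(1)\}$ has probability zero, so I interpret $\mathbb{E}[H_t/\sqrt t \mid A_t]$ as the Palm-type limit $\lim_{\varepsilon \to 0}\mathbb{E}[H_t/\sqrt t \mid \text{a swap at position }1 \text{ occurs in }[t,t+\varepsilon]]$, equivalently $\lim_{\varepsilon \to 0} \frac{\mathbb{E}[H_t \mathbf{1}\{\text{swap in }[t,t+\varepsilon]\}]}{\mathbb{E}[\mathbf{1}\{\text{swap in }[t,t+\varepsilon]\}] \sqrt t}$. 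The denominator rate is $\mathbb{P}(\pi_t^{-1}(1) < \pi_t^{-1}(2))$, the chance that a swap is \emph{possible} at position $1$ at time $t$; by symmetry and a coupling with TASEP this is asymptotically $\mathbb{P}(T_1 > t) \sim \frac{1}{\sqrt{\pi t}}$. The numerator requires the conditional law of the jump size. Here the key structural fact is that, conditioned on a swap at position $1$ at time $t$, the label $\pi_{t-}^{-1}(1)$ and the label $\pi_{t-}^{-1}(2)$ are the two smallest among the labels currently occupying positions $1$ and $2$, and the difference $H_t$ is the gap between them. Using $\pi_t^{-1}(j) \overset{(d)}{=} \pi_t(j)$ for $j=1,2$ (Corollary~\ref{thm2.0}) together with Proposition~\ref{thm4}, both $\pi_t^{-1}(1)$ and $\pi_t^{-1}(2)$ are of order $t + O(\sqrt t)$ with the GUE corners fluctuation; the conditioning on $\{$swap at position $1\}$ tilts this, and I would compute that $H_t/\sqrt t$ converges to a limiting random variable whose expectation is $\sqrt\pi$ by writing $H_t$ as the overshoot/gap of the relevant random walk at its conditioned-to-be-late hitting time and using the explicit Skellam/local-CLT asymptotics $p(k;t,t)\sim e^{-k^2/4t}/\sqrt{4\pi t}$. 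Concretely, conditioned on the walk not having hit $0$ by time $t$, its position divided by $\sqrt t$ has a known limiting density (the Rayleigh-type density $\frac{x}{2}e^{-x^2/4}$ on $(0,\infty)$ after the $\sqrt{2}$ scaling), and integrating $x$ against it yields $\mathbb{E} = \sqrt\pi$.

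The main obstacle, and the step I would spend the most care on, is the second limit: rigorously justifying the conditional description of $H_t$ and the exchange of limits $t\to\infty$ and $\varepsilon \to 0$. One must argue that the event $\{$swap at position $1$ in $[t,t+\varepsilon]\}$, in the $\varepsilon\to 0$ limit, is equivalent to $\{\pi_t^{-1}(1) < \pi_t^{-1}(2)\}$ up to lower-order corrections, and that on this event the jump size $H_t = \pi_{t+}^{-1}(1) - \pi_t^{-1}(1)$ has the same asymptotic law as the gap between the two weakest particles among those ever appearing at the first two sites — this is where the Hecke-algebra symmetry is used most delicately, since a naive application only controls fixed-time marginals, not the behavior across the swap. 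I would control this by a coupling argument: run the process to time $t$, use the symmetry to replace the configuration at positions $1,2$ by the inverse configuration, and then note that the dynamics of the next swap at position $1$ depend only on the two smallest labels present, whose joint distribution (after $\sqrt t$ scaling) converges by Proposition~\ref{thm4}. A secondary technical point is uniform integrability of $H_t/\sqrt t$ (to pass from convergence in distribution to convergence of means), which I would get from the exponential tails of the Skellam distribution, giving a bound of the form $\mathbb{P}(H_t > x\sqrt t \mid A_t) \le C e^{-cx^2}$ uniformly in large $t$.
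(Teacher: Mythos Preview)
Your renewal-process approach to the first limit has a genuine gap: the successive inter-swap times at position $1$ are neither independent nor distributed as $T_1$. The very first swap at position $1$ occurs at an $\mathrm{Exp}(1)$ time --- the first ring of the clock between sites $1$ and $2$ always succeeds, since $\pi_s^{-1}(1)=1$ up to that ring --- whereas $T_1$ has the heavy tail $\mathbb{P}(T_1>t)\sim(\pi t)^{-1/2}$. After any swap, the waiting time until the next one depends on the entire configuration (specifically on how far away the nearest label exceeding the current $\pi_t^{-1}(1)$ sits; cf.\ the quantity $R_t$ later in Section~\ref{sec:Main}), so there is no i.i.d.\ renewal structure and your Tauberian step does not apply. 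The paper's argument bypasses all of this: it writes
\[
\mathbb{E}[J_t]=\int_0^t \mathbb{P}\bigl(\pi_s^{-1}(1)<\pi_s^{-1}(2)\bigr)\,ds,
\]
uses the single-time symmetry (Corollary~\ref{thm2.0}) to turn the integrand into $\mathbb{P}(\pi_s(1)<\pi_s(2))=\mathbb{P}(T_1>s)$, and integrates via Lemma~\ref{lem5.3}. No independence of inter-arrivals is needed, only the instantaneous rate.

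Your treatment of the second limit is essentially the paper's, stated more circuitously. The paper makes the same Palm interpretation of $A_t$, takes the same denominator $\mathbb{P}(T_1>t)$, and for the numerator observes via symmetry that $H_t$ on $A_t$ has the law of $N_{t,1}$ on $\{T_1>t\}$; an exact reflection computation then shows $\mathbb{E}[N_{t,1};\,T_1>t]$ tends to $1$, yielding $\mathbb{E}[H_t\mid A_t]\sim\sqrt{\pi t}$ directly. Your Rayleigh-limit route reaches the same value, but the exact identity makes your uniform-integrability and limit-exchange worries unnecessary; the detours through the GUE corners process and ``the two smallest labels'' are not needed.
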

\begin{remark}
    In the second part of the theorem, the expectation is conditioned on an event $A_t$ of probability zero. We consider the conditional expectation $\mathbb{E}[H_t|A_{t}]$ as the limit of $\mathbb{E}[H_t|A_{t,\epsilon}]$ when $\epsilon \to \infty$, where $A_{t,\epsilon} := \{\pi_t^{-1} (1) < \pi_{t + \epsilon}^{-1} (1)\}$. In other words, this can be interpreted as conditioning that there is a jump at time $t$(the probability of more than one jump is negligible), the expectation of the label difference of the swapped particles. 
\end{remark}
\begin{proof}
    For any short time $\delta t$ the probability that a swap occurs at position $1$ is $\mathbb{P}\left(\pi_s^{-1}\left(1\right) < \pi_s^{-1}\left(2\right)\right) \times \delta t$. Applying the symmetry property \ref{thm2.0} we have
    $$
    \begin{aligned}
        \mathbb{E}[J_t] =  \int_0^{t} \mathbb{P}\left(\pi_s^{-1}\left(1\right) < \pi_s^{-1}\left(2\right)\right) ds  = \int_0^{t} \mathbb{P}\left(\pi_s\left(1\right) < \pi_s\left(2\right)\right) ds.
    \end{aligned}
    $$
    By Lemma \ref{lem5.3}, we have $\mathbb{P}\left(\pi_s\left(1\right) < \pi_s\left(2\right)\right) = \frac{1}{\sqrt{\pi s}} + o\left(\frac{1}{\sqrt{s}}\right).$ The result $\lim_{t \to \infty}\mathbb{E}[\frac{J_t}{\sqrt{t}}] = 2/\sqrt{\pi}$ is deduced from the following fact: $$\mathbb{E}[J_t] = \int_0^{t} \frac{1}{\sqrt{\pi s}} + o\left(\frac{1}{\sqrt{s}}\right) ds = \frac{2}{\sqrt{\pi}}\sqrt{t} + o\left(\sqrt{t}\right).$$
    
    To compute the expectation of $\mathbb{E}[H_t]$ again we use symmetry to transform it into a problem of random walk, we use $N_{t,k}, N_{t}^{1}, N_{t}^{2}$ as the same notation that we used in Lemma \ref{thm5.1}: 
    $$
    \begin{aligned}
        \mathbb{E}[H_t| A_t] & = \lim_{\epsilon \to 0} \mathbb{E}[H_t| A_{t,\epsilon}] = \frac{\mathbb{E}[N_{t,1}, T_{1} > t] \epsilon t}{\mathbb{P} \left(T_{1} > t\right)\epsilon t} = \frac{\mathbb{E}[N_{t,1}, T_{1} > t]}{\mathbb{P} \left(T_{1} > t\right)} \\
        & = \frac{\sum_{k = 1}^{\infty} k \mathbb{P}\left(N_{t,1} = k, T_{1} > t\right)}{\mathbb{P} \left(T_{1} > t\right)} = \frac{\sum_{k = 1}^{\infty} k[\mathbb{P}\left(N_{t,1} = k\right) - \mathbb{P}\left(N_{t,1} = -k\right)]}{\mathbb{P} \left(T_{1} > t\right)}\\
        & = \frac{1 + \mathbb{P}\left(N_t^{1} - N_t^{2} = 2\right)}{\mathbb{P} \left(T_{1} > t\right)}.\\
    \end{aligned}
    $$
    We arrive at the desired result by taking $t \to \infty$ and applying Lemma \ref{lem5.3}. 
\end{proof}

Now we turn to the distribution of the process $\pi_t^{-1}(1)$ at two different times. Since the symmetry property (Corollary \ref{thm2.0}) applies only for a fixed time $t$, the study of the joint distribution will be more difficult. The following theorem deals with how long the particle at position $1$ at time $t$ will remain the same.  
\begin{theorem}\label{thm5.2}
    \begin{equation}
        \lim_{t \to \infty}\mathbb{P}\left(\pi^{-1}_t\left(1\right) < \pi^{-1}_{t+ c t^{\alpha}}\left(1\right)\right) = \left \{
            \begin{array}{cc}
            0, & \text{if $\alpha \in (0,\frac{1}{2})$,} \\
            1, & \text{if $\alpha > \frac{1}{2}$,} \\
            \int_{-c}^c \frac{1}{\sqrt{4\pi}} \exp{\left(-\frac{x^2}{4\pi}\right)}, & \text{if $\alpha = \frac{1}{2}$.}
        \end{array}
    \right .\nonumber
    \end{equation}
\end{theorem}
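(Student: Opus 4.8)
\emph{A reduction valid for every $\alpha$.} Since $u\mapsto\pi^{-1}_u(1)$ is non-decreasing, $\{\pi^{-1}_t(1)<\pi^{-1}_{t+ct^{\alpha}}(1)\}$ is exactly the event that at least one successful swap occurs at position $1$ during $[t,t+ct^{\alpha}]$. I would first record two elementary structural facts: $\pi^{-1}_u(1)$ is non-decreasing, and $\pi^{-1}_u(2)$ changes only at swaps at positions $1$ or $2$, can only increase at a position-$2$ swap, and at a position-$1$ swap is reset to the label previously sitting at position $1$. Together these imply that from the first swap at position $1$ after time $t$ onward one has $\pi^{-1}_u(2)\ge\pi^{-1}_t(1)$ for all later $u$, hence $\{\pi^{-1}_t(1)<\pi^{-1}_{t+ct^{\alpha}}(1)\}\subseteq\{\pi^{-1}_{t+ct^{\alpha}}(2)\ge\pi^{-1}_t(1)\}$; conversely, on the complementary event $\{\pi^{-1}_t(1)=\pi^{-1}_{t+ct^{\alpha}}(1)\}$ the inequality $\pi^{-1}_{t+ct^{\alpha}}(2)\ge\pi^{-1}_t(1)$ forces an inversion at position $1$ at the single time $t+ct^{\alpha}$, which by the single-time symmetry (Corollary~\ref{thm2.0}) and Lemma~\ref{lem5.3} has probability $\mathbb{P}(\pi_{t+ct^{\alpha}}(1)<\pi_{t+ct^{\alpha}}(2))=O(t^{-1/2})$. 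Thus
$$\mathbb{P}\bigl(\pi^{-1}_t(1)<\pi^{-1}_{t+ct^{\alpha}}(1)\bigr)=\mathbb{P}\bigl(\pi^{-1}_{t+ct^{\alpha}}(2)\ge\pi^{-1}_t(1)\bigr)+O(t^{-1/2}).$$

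\emph{The two extreme regimes.} For $\alpha<\tfrac12$ the left-hand side is at most the expected number of swaps at position $1$ in $[t,t+ct^{\alpha}]$, which (exactly as in the proof of Theorem~\ref{thm5.19}, via Corollary~\ref{thm2.0} and Lemma~\ref{lem5.3}) equals $\int_t^{t+ct^{\alpha}}\mathbb{P}(\pi_s(1)<\pi_s(2))\,ds=\bigl(1+o(1)\bigr)\int_t^{t+ct^{\alpha}}\frac{ds}{\sqrt{\pi s}}=O(t^{\alpha-1/2})\to0$. For $\alpha>\tfrac12$ one uses the right-hand side of the reduction: $\pi^{-1}_{t+ct^{\alpha}}(2)$ has mean $\approx t+ct^{\alpha}$ with fluctuations of order $\sqrt{t}$ (Corollary~\ref{thm2.5}, Proposition~\ref{thm4}), while $\pi^{-1}_t(1)=t+O_{\mathbb P}(\sqrt t)$, so $\pi^{-1}_{t+ct^{\alpha}}(2)-\pi^{-1}_t(1)\approx ct^{\alpha}\gg\sqrt t$ with probability tending to $1$, giving the limit $1$.

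\emph{The critical case $\alpha=\tfrac12$.} Set $X_t=\dfrac{\pi^{-1}_t(1)-t}{\sqrt t}$ and $Y_t=\dfrac{\pi^{-1}_{t+c\sqrt t}(2)-(t+c\sqrt t)}{\sqrt t}$, so that $\{\pi^{-1}_{t+c\sqrt t}(2)\ge\pi^{-1}_t(1)\}=\{X_t-Y_t\le c\}$ identically, and by Proposition~\ref{thm4} (single-time symmetry together with Baryshnikov's theorem, as in Corollary~\ref{thm2.5}) the marginals converge, $X_t\Rightarrow\lambda_1^1$ and $Y_t\Rightarrow\lambda_2^2$. It remains to show that the two-time difference converges to $\lambda_1^1-\lambda_2^2$, the difference between the eigenvalue of the $1\times1$ corner and the smallest eigenvalue of the $2\times2$ corner of a GUE matrix. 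I would obtain this through the coupling of Section~\ref{sec:Flu}: realize $\pi^{-1}_u(1),\pi^{-1}_u(2)$ (via the single-time symmetry) alongside the first two particles from the right of one fixed step-initial-condition TASEP, so that the object in question becomes the joint law of two particle positions of a single TASEP observed at times $t$ and $t+c\sqrt t$; the remaining input is a quantitative two-time estimate showing that on the $\sqrt t$ scale this joint law collapses onto the single-time one, i.e.\ onto the interlacing pair $\lambda_2^2\le\lambda_1^1$. Granting this, $X_t-Y_t\Rightarrow\lambda_1^1-\lambda_2^2$, and a direct computation with a $2\times2$ GUE matrix — writing $\lambda_1^1=H_{11}$ and $\lambda_2^2=\tfrac{H_{11}+H_{22}}{2}-\sqrt{\bigl(\tfrac{H_{11}-H_{22}}{2}\bigr)^2+|H_{12}|^2}$ — identifies $\lambda_1^1-\lambda_2^2$ with the absolute value of a centered Gaussian of variance $2$ (for instance by the explicit change of variables $a=\tfrac{H_{11}-H_{22}}{2}$, $b=|H_{12}|$, or by matching the first two moments $\mathbb{E}[\lambda_1^1-\lambda_2^2]=\tfrac{2}{\sqrt\pi}$, $\mathbb{E}[(\lambda_1^1-\lambda_2^2)^2]=2$), which is precisely the folded Gaussian $\mathcal G$ of Theorem~\ref{thmty}; this yields the claimed distribution function.

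\emph{The main obstacle.} The hard step is the last one: extracting the collapse of the two-time joint law onto the GUE-corners pair. The single-time symmetry of Corollary~\ref{thm2.0} is unavailable at two distinct times, so this has to be squeezed out of the TASEP coupling together with a genuine two-time control on the displacement of a near-stationary rightmost/second-rightmost particle over a window of length $c\sqrt t$ — exactly the kind of two-time input the paper identifies as the barrier to a full two-point function. Everything else reduces to the first-moment and monotonicity bookkeeping above.
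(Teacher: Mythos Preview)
Your treatment of $\alpha<\tfrac12$ is essentially the paper's first-moment bound, and your $\alpha>\tfrac12$ argument via the marginal CLT for $\pi^{-1}_t(1)$ and $\pi^{-1}_{t+ct^{\alpha}}(2)$ is fine (only marginal control is needed there). The reduction
\[
\mathbb{P}\bigl(\pi^{-1}_t(1)<\pi^{-1}_{t+ct^{\alpha}}(1)\bigr)=\mathbb{P}\bigl(\pi^{-1}_{t+ct^{\alpha}}(2)\ge\pi^{-1}_t(1)\bigr)+O(t^{-1/2})
\]
is also correct and pleasant.

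The genuine gap is in the critical case $\alpha=\tfrac12$. There you need the \emph{joint} law of $\bigl(\pi^{-1}_t(1),\pi^{-1}_{t+c\sqrt t}(2)\bigr)$, and you propose to ``realize $\pi^{-1}_u(1),\pi^{-1}_u(2)$ (via the single-time symmetry) alongside the first two particles from the right of one fixed step-initial-condition TASEP''. This step does not go through: Corollary~\ref{thm2.0} is an equality in distribution at each fixed time, not a coupling of the processes $u\mapsto\pi^{-1}_u$ and $u\mapsto\pi_u$; and the coupling of Section~\ref{sec:Flu} compares $\pi_t(l)$ with $\sigma^0_t(l)$, not $\pi^{-1}_t(l)$. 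So there is no available mechanism to transport the two-time question for $\pi^{-1}$ to a two-time question for a single TASEP. You flag this honestly as ``the main obstacle'' and then write ``Granting this'', but nothing in the toolbox grants it; your reduction has merely traded one two-time statement for another.

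The paper's proof sidesteps this two-time difficulty entirely, and that is the idea you are missing. It introduces $R_t=\min\{k\ge2:\pi^{-1}_t(k)>\pi^{-1}_t(1)\}$ and makes two observations: (i) the event $\{R_t=k\}$ depends only on the configuration at the \emph{single} time $t$, so its probability can be computed via Corollary~\ref{thm2.0} and the reflection principle for a continuous-time simple random walk, giving $\mathbb{P}(R_t=k)=\mathbb{P}(N^1_t-N^2_t=k-2)+\mathbb{P}(N^1_t-N^2_t=k-1)$; and (ii) conditionally on $R_t=k$, the process $s\mapsto R_{t+s}$ decreases by one exactly when the Poisson clock at position $R_{t+s}-1$ rings (these rings are always accepted, and no other clock can change $R$), so the waiting time until the first swap at position $1$ is, \emph{independently of the past}, a sum of $k-1$ i.i.d.\ exponentials. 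This yields the exact formula
\[
\mathbb{P}\bigl(\pi^{-1}_t(1)<\pi^{-1}_{t+\tau}(1)\bigr)=\sum_{k\ge2}\mathbb{P}(\mathrm{Poi}(\tau)=k-1)\sum_{i=2-k}^{k-1}\mathbb{P}(N^1_t-N^2_t=i),
\]
after which the $\alpha=\tfrac12$ limit is pure asymptotic analysis (Skellam and Poisson local CLTs), producing the folded Gaussian. In short: the paper decouples the two times by discovering a statistic $R_t$ whose law is single-time and whose future evolution is Markov and explicit; your route keeps the two-time coupling problem intact and does not resolve it.
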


To prove this theorem, we first need to transform the desired event into another form. We note that the necessary condition for two particles in position $1$ and $2$ to be exchangeable is $\pi^{-1}_t\left(1\right) < \pi^{-1}_t\left(2\right)$, which leads to the following definition: let $R_t$ be the location where the next particle in $\mathbb{Z}^{+}$ can be swapped with $\pi^{-1}_t\left(1\right)$, that is, $\pi^{-1}_t\left(1\right) > \pi^{-1}_t\left(i\right)$, for any $i < R_t$, but $\pi^{-1}_t\left(1\right) < \pi^{-1}_t\left(R_t\right)$.

\begin{remark}\label{them5.3}
     We observe that even if the particle at $R_t$ jumps to its right, the rule of the swap ensures that the new particle at position $R_t$ remains the nearest larger particle after $\pi^{-1}_t\left(1\right)$. This implies that $R_t$ is a non-increasing random variable, provided that no swap has occurred at position $1$ after time $t$. In addition, if no swap has occurred during time $[t,t+s]$, $ - R_t$, as a random process in $[t,t+s]$ is a Possion process with rate $1$. Thus, given $R_t = k$, the probability of observing a new label at position $1$ within time $\tau$ is given by :$$\mathbb{P}\left(\pi^{-1}_t\left(1\right) < \pi^{-1}_{t + \tau}\left(1\right)\right) = \mathbb{P}\left(Poi\left(\tau\right) \geq k-1\right).$$
\end{remark}

\begin{lemma}
    Let $\tau \in \mathbb{R}^{+}$,
    $$
    \mathbb{P}\left(\pi^{-1}_t\left(1\right) < \pi^{-1}_{t+ \tau}\left(1\right)\right) = \sum_{k=2}^{\infty} [\mathbb{P}\left(Poi\left(\tau\right) = k - 1\right)][\sum_{i = 2-k}^{k - 1} \mathbb{P}\left(N_t^{1} - N_t^{2} = i\right)],
    $$
    where $N_t^{1}, N_t^{2}$ are two independent Poisson processes on $\mathbb{R}^{+}$.
\end{lemma}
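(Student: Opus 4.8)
The plan is to derive the identity by conditioning on the position $R_t$ defined just before the statement, using Remark~\ref{them5.3} to turn "the next swap at position $1$" into a waiting-time statement, and then computing the exact law of $R_t$ by hand. Concretely, Remark~\ref{them5.3} tells us that, given $R_t=k$, the process $-R_s$ for $s\ge t$ runs as a rate-$1$ Poisson process until the next swap at position $1$, which occurs precisely once this process has made $k-1$ jumps; hence $\mathbb P\bigl(\pi^{-1}_t(1)<\pi^{-1}_{t+\tau}(1)\mid R_t=k\bigr)=\mathbb P\bigl(\mathrm{Poi}(\tau)\ge k-1\bigr)$. Summing over $k\ge 2$ (recall $R_t\ge 2$ always) gives $\mathbb P\bigl(\pi^{-1}_t(1)<\pi^{-1}_{t+\tau}(1)\bigr)=\sum_{k\ge2}\mathbb P(R_t=k)\,\mathbb P(\mathrm{Poi}(\tau)\ge k-1)$, and a Fubini/Abel rearrangement of the double sum $\sum_{k\ge2}\sum_{m\ge k-1}\mathbb P(R_t=k)\mathbb P(\mathrm{Poi}(\tau)=m)$ (interchanging the order and substituting $m=k-1$) rewrites this as $\sum_{k\ge2}\mathbb P(\mathrm{Poi}(\tau)=k-1)\,\mathbb P(R_t\le k)$. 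So the whole statement reduces to proving $\mathbb P(R_t\le k)=\sum_{i=2-k}^{k-1}\mathbb P(N_t^1-N_t^2=i)$, i.e.\ that $\mathbb P(R_t>k)=\mathbb P(T_{k-1}\le t)$ in the notation of Lemma~\ref{thm5.1}.

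For the law of $R_t$, first I would use the symmetry of Corollary~\ref{thm2.0}: the event $\{R_t>k\}=\{\pi^{-1}_t(1)>\pi^{-1}_t(i)\text{ for all }i\le k\}$ has the same probability as $\{\pi_t(1)=\max_{1\le i\le k}\pi_t(i)\}$, i.e.\ that particle $1$ is the rightmost of particles $1,\dots,k$ at time $t$. Now couple the oriented swap process with the TASEP $T_k\pi_t$ (a TASEP on $\mathbb Z^+$ with initial condition $\mathbf 1_{x\le k}$) driven by the same edge clocks. Two facts are then transparent: $\max_{i\le k}\pi_t(i)$ is the position of the rightmost particle of $T_k\pi_t$, which starts at position $k$ and jumps right at rate $1$ forever (its right neighbour is always empty); and $\pi_t(1)$, being the position of the strongest label, also jumps right at rate $1$ forever. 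Consider $\Delta_t:=\max_{i\le k}\pi_t(i)-\pi_t(1)\ge0$ with $\Delta_0=k-1$. As long as $\Delta_t\ge1$ the two particles occupy distinct sites, so the only clocks that can change $\Delta_t$ — the right-edge clock of particle $1$ and the right-edge clock of the rightmost $T_k\pi_t$-particle — lie on distinct edges, hence are independent rate-$1$ Poisson processes; thus $\Delta_t$ performs a continuous-time symmetric walk with $\pm1$ steps at rate $1$ each. Once $\Delta_t$ hits $0$, particle $1$ is the rightmost of $1,\dots,k$ and, being strongest, remains so, i.e.\ $\Delta$ is absorbed at $0$. Therefore $\Delta_t$ is distributed as $N_{t,k-1}$ stopped at $0$, $\{R_t>k\}=\{\Delta_t=0\}=\{T_{k-1}\le t\}$, and Lemma~\ref{thm5.1} gives $\mathbb P(R_t\le k)=\mathbb P(T_{k-1}>t)=\sum_{i=2-k}^{k-1}\mathbb P(N_t^1-N_t^2=i)$. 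Substituting into the reduced sum of the first paragraph finishes the proof.

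The hard part is the random-walk identification of $\Delta_t$, and that is where I would be most careful. One must check: (a) every firing of particle $1$'s right-edge clock actually moves particle $1$ to the right, which holds because label $1$ is the minimal label and hence swaps to its right even when the adjacent site is occupied; (b) no other clock changes $\pi_t(1)$ or $\max_{i\le k}\pi_t(i)$ — the former because particle $1$ never moves left and only through its own right edge, the latter because TASEP preserves particle order, so the rightmost $T_k\pi_t$-particle stays rightmost and moves only through its own right edge, and the label sitting to its right is necessarily $>k$ so the swap there always occurs; (c) the boundary case $\Delta_t=1$, where the firing realizing a $-1$ step is exactly the firing that sends $\Delta_t$ to $0$, so absorption is consistent with the symmetric-walk description; and (d) that a firing of particle $1$'s right-edge clock may alter the configuration $T_k\pi_t$ but, when $\Delta_t\ge2$, only at a site strictly left of the rightmost $T_k\pi_t$-particle, hence without displacing it. Once these local bookkeeping points are settled, the rest — the Fubini rearrangement and the invocation of Lemma~\ref{thm5.1} — is routine.
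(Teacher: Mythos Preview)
Your proposal is correct and follows essentially the same route as the paper: condition on $R_t$, use Remark~\ref{them5.3} for the conditional probability, apply the symmetry of Corollary~\ref{thm2.0}, identify the event with a continuous-time simple random walk started at $k-1$ not (or already) hitting $0$, invoke Lemma~\ref{thm5.1}, and perform an Abel/Fubini rearrangement. The only cosmetic differences are that the paper computes $\mathbb P(R_t=k)$ and sums by parts at the end, whereas you sum by parts first and then compute $\mathbb P(R_t\le k)$; and you spell out the random-walk identification of $\Delta_t$ in detail (points (a)--(d)), while the paper simply asserts it.
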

\begin{proof}
    We start by rewriting the event $\{R_t = k\}$ as the difference of two events, 
    $$
    \begin{aligned}
        \{R_t = k\} &= \{R_t \leq k\} \cap \{R_t \leq k - 1\}^{c}\\
                &= \{\pi^{-1}_t\left(1\right) < \max_{i \in [2,k]}\left(\pi^{-1}_t\left(i\right)\right)\} \cap \{\pi^{-1}_t\left(1\right) < \max_{i \in [2,k-1]}\left(\pi^{-1}_t\left(i\right)\right)\}^{c}.
    \end{aligned}   
    $$
    Thus,
    $$
    \mathbb{P}\left(R_t = k\right) = \mathbb{P}\left(\pi^{-1}_t\left(1\right) < \max_{i \in [2,k]}\pi^{-1}_t\left(i\right)\right) - \mathbb{P}\left(\pi^{-1}_t\left(1\right) < \max_{i \in [2,k-1]}\pi^{-1}_t\left(i\right)\right).
    $$
    Applying Corollary \ref{thm2.0}, we get the following expression, 
    $$
    \mathbb{P}\left(R_t = k\right) = \mathbb{P}\left(\pi_t\left(1\right) < \max_{i \in [2,k]}\left(\pi_t\left(i\right)\right)\right) - \mathbb{P}\left(\pi_t\left(1\right) < \max_{i \in [2,k-1]}\left(\pi_t\left(i\right)\right)\right).
    $$
    The event $\{ \pi_t\left(1\right) < \max\left(\pi_t\left(2\right), \dots \pi_t\left(k\right)\right)\}$ can be interpreted as a simple random walk in continuous time $N_{t,k-1}$ with initial value $N_{0,k-1} = k-1$, which does not hit $0$ until time $t$. Then by Lemma \ref{thm5.1}, we have
    $$
    \begin{aligned}
        \mathbb{P}\left(R_t = k\right) = &\sum_{i = 2 - k}^{k - 1} \mathbb{P}\left(N_t^{1} - N_t^{2} = i\right) - \sum_{i = 3 - k}^{k - 2} \mathbb{P}\left(N_t^{1} - N_t^{2} = i\right) \\
        &= \mathbb{P}\left(N_t^{1} - N_t^{2} = 2 - k\right) + \mathbb{P}\left(N_t^{1} - N_t^{2} = k - 1\right) \\
        &= \mathbb{P}\left(N_t^{1} - N_t^{2} = k - 2\right) + \mathbb{P}\left(N_t^{1} - N_t^{2} = k - 1\right),
    \end{aligned}    
    $$
    where $N_t^{1}$ and $N_t^{2}$ are two independent Poisson processes with rate $1$. 
    Due to the law of total probability and Remark \ref{them5.3}, we can write explicitly $\mathbb{P}(\pi^{-1}_t(1) < \pi^{-1}_{t+ \tau}(1))$, 
    $$
    \begin{aligned}
        \mathbb{P}(\pi^{-1}_t(1) < \pi^{-1}_{t+ \tau}(1)) &= \sum_{k=2}^{\infty} \mathbb{P}(\pi^{-1}_t(1) < \pi^{-1}_{t+ \tau}(1), R_t = k)  = \sum_{k=2}^{\infty} \mathbb{P}(\pi^{-1}_t(1) < \pi^{-1}_{t+ \tau}(1) | R_t = k) \mathbb{P}(R_t = k)\\
        & = \sum_{k=2}^{\infty} \mathbb{P}(Poi(\tau) \geq k - 1) (\mathbb{P}(N_t^{1} - N_t^{2} = k - 2) + \mathbb{P}(N_t^{1} - N_t^{2} = k - 1))\\
        &= \sum_{k=2}^{\infty} \mathbb{P}(Poi(\tau) = k - 1)\sum_{i = 2-k}^{k - 1} \mathbb{P}(N_t^{1} - N_t^{2} = i).
    \end{aligned}
    $$
   The second to last line to the last line is due to the Abel transformation.
\end{proof}

\begin{proof}[Proof of Theorem \ref{thm5.2}]
    If $\alpha < \frac{1}{2}$, 
    $$
    \begin{aligned}
        \mathbb{P}\left(\pi^{-1}_t\left(1\right) < \pi^{-1}_{t+ C t^{\alpha}}\left(1\right)\right) &= \sum_{k=2}^{\infty} \mathbb{P}\left(Poi\left(c t^{\alpha}\right) = k - 1\right)\left(\sum_{i = 2-k}^{k - 1} \mathbb{P}\left(N_t^{1} - N_t^{2} = i\right)\right)\\
        &\leq \sum_{k=2}^{\infty} \left(2k-2\right)\mathbb{P}\left(Poi\left(c t^{\alpha}\right) = k - 1\right) \mathbb{P}\left(N_t^{1} - N_t^{2} = 0\right)\\
        &= \sum_{k=1}^{\infty} 2k \mathbb{P}\left(Poi\left(c t^{\alpha}\right) = k\right) \mathbb{P}\left(N_t^{1} - N_t^{2} = 0\right)\\
        &= 2\mathbb{E}\left[Poi\left(c t^{\alpha}\right)\right] \mathbb{P}\left(N_t^{1} - N_t^{2} = 0\right) = 2c t^{\alpha} \mathbb{P}\left(N_t^{1} - N_t^{2} = 0\right).
    \end{aligned}
    $$
    However, $\lim_{t \to \infty}2c t^{\alpha} \mathbb{P}(N_t^{1} - N_t^{2} = 0) = 0$, when $\alpha < \frac{1}{2}$, so $\lim_{t \to \infty} \mathbb{P}(\pi^{-1}_t(1) < \pi^{-1}_{t+ c t^{\alpha}}(1)) = 0$.

    If the case of $\alpha = \frac{1}{2}$ can be proved, then the case of $\alpha > \frac{1}{2}$ is clear by taking $c \to \infty$ in $\alpha = \frac{1}{2}$ case. So it is sufficient to prove the critical $\alpha = \frac{1}{2}$ case.  We may assume $c=1$, the prove of general $c$ follows in exactly the same way. 

    According to the central limit theorem, $Poi(\sqrt{t})$ takes the values at order $\sqrt{t}$ and has fluctuations of order $\sqrt[4]{t}$. We first show that the terms when $k$ does not belong to this interval are sufficiently small.

    We consider the sum $$\sum_{k=t^\frac{1}{2} + t^{\alpha}}^{\infty} \mathbb{P}(Poi(\sqrt{t}) = k ) = \mathbb{P}(Poi(\sqrt{t}) \in [t^\frac{1}{2} + t^{\alpha}, \infty)),$$ where $\alpha > \frac{1}{4}$, by central limit theorem, the sum tends to $0$ when $t \to \infty$. Applying the fact that $\sum_{i = 2-k}^{k - 1} \mathbb{P}(N_t^{1} - N_t^{2} = i) \leq 1$, we obtain

    $$
    \lim_{t \to \infty}\sum_{k=t^\frac{1}{2} + t^{\alpha}}^{\infty} \mathbb{P}(Poi(\sqrt{t}) = k ) (\sum_{i = 2-k}^{k - 1} \mathbb{P}(N_t^{1} - N_t^{2} = i) \leq 1) = 0.
    $$

    Similarly, it can be shown that the main value of the sum is concentrated within the order given by the central limit theorem for $n$ and $m$ as well, $n$ at order $\sqrt{t}$ and $m$ at order $t$ and has fluctuations of order $\sqrt{t}$. To simplify the notation, we add $n = 1 - k, -k , k$ in the expression which are negligible when $t \to \infty$. The last identity in the following computation is obtained from the Stirling's approximation, 

    $$
    \begin{aligned}
        \mathbb{P}(\pi^{-1}_t(1) < \pi^{-1}_{t+ t^{\frac{1}{2}}}(1)) &= 2\sum_{k=2}^{\infty}\sum_{n = 0}^{k}\sum_{m=0}^{\infty} \mathbb{P}(Poi(\sqrt{t}) = k)  \mathbb{P}(Poi(t) = m)\mathbb{P}(Poi(t) = m+n) + o(1)\\
        &= 2\sum_{k=2}^{\infty}\sum_{n = 0}^{k}\sum_{m=0}^{\infty} \exp{(-\sqrt{t} - 2t)} t^{\frac{k}{2}}\frac{t^{2m + n}}{k! m! (m+n)!} + o(1)\\
        &= \sum_{k=2}^{\infty}\sum_{n = 0}^{k}\sum_{m=0}^{\infty} \frac{2}{\sqrt{(2 \pi)^{3}}} \frac{t^{2m + n} \exp{(2m + n + k)} \exp{(-\sqrt{t} - 2t)} t^{\frac{k}{2}}}{k^k m^m (m+n)^{m+n} \sqrt{(m+n)mk}} + o(1).
    \end{aligned}
    $$
    According to the Central Limit Theorem stated earlier, we retain only the major contributors, relegating the remainder to $o(1)$. Setting $n =  i$, $k = \sqrt{t} + j$, $m = t + l$, the probability above can be expressed as:
    $$
        \lim_{c \to \infty} 2\sum_{j=-c}^{c}\sum_{i = 0}^{\sqrt{t}}\sum_{l=-c}^{c}  \frac{1}{\sqrt{(2 \pi)^{3}}} \exp(2l + j + i) \frac{1}{(1 + \frac{j}{\sqrt[2]{t}})^{\sqrt{t}+j}(1 + \frac{l}{t})^{t+l}(1 + \frac{l + i}{t})^{t + l + i}}\frac{1}{t^{\frac{1}{2}} t^{\frac{1}{2}} t^{\frac{1}{4}}}+o(1).
    $$
    For any $l = 1,2, \dots c\sqrt{t}$ we have the fact that can be applied to those three terms as denominator. The proof follows from taking the logarithm and $ln\left(1 + x\right) = x + o\left(1\right)$, we obtain 
    
    $$\lim_{t\to \infty}\frac{(1 + \frac{l}{t})^{t + l}}{\exp{(l + \frac{l^2}{t})}} =\lim_{t\to \infty}\exp\left((t+l) \log(1 + \frac{l}{t}) - l - \frac{l^2}{t}\right) = \lim_{t\to \infty}\exp\left(o(1)\right) = 1.$$

    Applying the above identity, we have 
    $$
    \mathbb{P}(\pi^{-1}_t(1) < \pi^{-1}_{t+ t^{\frac{1}{2}}}(1)) = \lim_{c\to \infty }2\sum_{j=-c}^{c}\sum_{i = 0}^{\sqrt{t}}\sum_{l=-c}^{c} \frac{1}{\sqrt{(2 \pi)^{3}}}(\frac{1}{\sqrt[4]{t}} e^{-\frac{j^2}{2}}) (\frac{1}{\sqrt{t}} e^{-\frac{l^2}{2}}) (\frac{1}{\sqrt{t}} e^{-\frac{(l+i)^2}{2}}) + o(1).
    $$
    Let $\mathcal{N}_1, \mathcal{N}_2$ be two independent standard Gaussian random variable, we have
    $$
    \begin{aligned}
        \lim_{t \to \infty} \mathbb{P}(\pi^{-1}_t(1) < \pi^{-1}_{t+ t^{\frac{1}{2}}}(1)) &= \lim_{c \to \infty} 2 (\int_{-c}^{c}\frac{1}{\sqrt{2\pi}}e^{-\frac{z^2}{2}}dz)\int_{0}^{1} dy \int_{-c}^{c} \frac{1}{\sqrt{2\pi}} e^{-\frac{x^2}{2}} \frac{1}{\sqrt{2\pi}} e^{-\frac{(x+y)^2}{2}} dx\\
        &= 2 (\int_{-\infty}^{\infty}\frac{1}{\sqrt{2\pi}}e^{-\frac{z^2}{2}}dz)\int_{0}^{1} dy \int_{-\infty}^{\infty} \frac{1}{\sqrt{2\pi}} e^{-\frac{x^2}{2}} \frac{1}{\sqrt{2\pi}} e^{-\frac{(x+y)^2}{2}} dx\\
        &= \int_{-1}^{1} dy \int_{-\infty}^{\infty} e^{-\frac{x^2}{2}} e^{-\frac{(x+y)^2}{2}} dx = \mathbb{P}(|\mathcal{N}_1(0,1) - \mathcal{N}_2(0,1)| \leq 1)\\
        &=\int_{-1}^1 \frac{1}{\sqrt{4\pi}} \exp{(-\frac{x^2}{4\pi})}dx.
    \end{aligned}
    $$
\end{proof}

\bibliographystyle{plain}
\bibliography{ref}
\end{document}